\newtheorem{theorem}{Theorem}
\newtheorem*{theorem*}{Theorem}
\newtheorem{proposition}{Proposition}
\newtheorem*{proposition*}{Proposition}
\newtheorem{problem}{Problem}
\newtheorem*{problem*}{Problem}
\newtheorem{conjecture}{Conjecture}
\newtheorem*{conjecture*}{Conjecture}
\newtheorem{heuristic}{Heuristic}
\newtheorem{claim}{Claim}
\newtheorem*{claim*}{Claim}
\newtheorem*{claimproof*}{Proof}
\newtheorem*{corollary*}{Corollary}
\newtheorem*{corollaryproof*}{Proof}
\newtheorem{lemma}{Lemma}
\newtheorem*{lemma*}{Lemma}
\newtheorem*{lemmaproof*}{Proof}
\definecolor {processblue}{cmyk}{0.96,0,0,0}
\newcommand{\note}[1]{}
\newcommand{\stam}[1]{}
\newcommand{\set}[1]{\left\{#1\right\}}
\newcommand\clearrow{\global\let\rowmac\relax}
\title{Efficient Generation of One-Factorizations through Hill Climbing}
\author{Maya Dotan\thanks{Department of Computer Science, Hebrew University, Jerusalem 91904, Israel. email: maya.dotan1@mail.huji.ac.il~.}\and {Nati Linial\thanks{Department of Computer Science, Hebrew University, Jerusalem 91904, Israel. email: nati@cs.huji.ac.il~. Supported by ERC grant 339096 "High dimensional combinatoric".}}}
\date{}
\begin{document}
\maketitle{}

\begin{abstract}
It is well known that for every even integer $n$, the complete graph $K_{n}$ has a one-factorization, namely a proper edge coloring with $n-1$ colors. Unfortunately, not much is known about the possible structure of large one-factorizations. Also, at present we have only woefully few explicit constructions of one-factorizations. Specifically, we know essentially nothing about the {\em typical} properties of one-factorizations for large $n$. 

Suppose that $\cal C_{\rm n}$ is a graph whose vertex set includes the set of all order-$n$ one-factorizations and that $\Psi: V(\cal C_{\rm n})\to \mathbb R$ takes its minimum precisely at the one-factorizations. Given $\cal C_{\rm n}$ and $\Psi$, we can generate one-factorizations via {\em hill climbing}. Namely, by taking a walk on $\cal C_{\rm n}$ that tends to go from a vertex to a neighbor of smaller $\Psi$. For over 30 years, hill-climbing has been essentially the only method for generating many large one-factorizations. However, the validity of such methods was supported so far only by numerical evidence. Here, we present for the first time hill-climbing algorithms that provably generate an order-$n$ one-factorization in $\text{polynomial}(n)$ steps regardless of the starting state, while all vertex degrees in the underlying graph are appropriately bounded.

We also raise many questions and conjectures regarding hill-climbing methods and concerning the possible and typical structure of one-factorizations.
\end{abstract}

\section{Introduction}
It is a very old result (e.g. \cite{Hall67}) that for every even integer $n$, the edges of the complete graph $K_{n}$ can be properly colored with $n-1$ colors. Such a coloring is called a {\em one-factorization}, and there is a considerable body of research dedicated to their study, e.g., \cite{mendelsohn1985one,wallis2013one}. A one-factorization is often viewed as a schedule for the games in a league of $n$ teams. Clearly, each of the $n-1$ color classes in a one-factorization is a perfect matching of $n/2$ edges. Accordingly, one speaks of $n-1$ rounds of games in each of which the $n$ teams are paired up to play. If the edge $ij$ is colored $k$, this means that teams $i$ and $j$ meet at round $k$. We prefer to view a one-factorization as a symmetric Latin square or, equivalently, a symmetric two-dimensional permutation (see \cite{linial2014upper}). This places the investigation of one-factorizations in the context of high-dimensional combinatorics.

We use the shorthand OF for one-factorization and we write OF$_n$ to indicate the order of the complete graph that is being factored. 

As we describe in Section \ref{section:expect}, many questions about the extremal and typical properties of OFs suggest themselves, but we presently lack the necessary tools to attack them. In particular, we need methods to generate OFs randomly, preferably uniformly, or at least with good control over the distribution of the generated OFs. Unfortunately, this goal seems presently out of reach. In fact, even the problem of systematically generating OFs (regardless of the distribution) is still poorly understood. Such methods and their analysis are the main technical contributions of the present article.

All the existing approaches to the systematic generation of OFs depend on the method of hill climbing as described, e.g., in \cite{dinitz1987hill}. They come with no proofs or guarantees for their run time, and our purpose is to remedy the situation and give hill-climbing generators with a guaranteed bound on their run time.

\subsection{The different flavors of hill-climbing}

Suppose that $W$ is a set of objects that we want to generate. A hill-climbing method
to do this works as follows: We define a graph $G=(V,E)$ with $W\subseteq V$ and a {\em potential function} $\Psi:V\to\mathbb R$ with the property that the restriction of $\Psi$ to $W$ is a constant function, and $\Psi(x)>\Psi(y)$ for all $x\in V\setminus W$ and $y\in W$. Hill climbing is carried out by taking a walk on $G$. Roughly, we consider three types of such walks:
\begin{itemize}
\item 
{\sl Strict:} We always move from a vertex $v \in V$ to a neighbor $u$ with $\Psi(v)>\Psi(u)$.
\item
{\sl Mild:}
Same, with $\Psi(v)\ge\Psi(u)$.
\item
{\sl Weak:} Most steps are from $v$ to neighbor $u$ with $\Psi(v)\ge\Psi(u)$, but occasional moves that increase $\Psi$ are allowed. 
\end{itemize}

Accordingly, we also speak of the {\em strict random walk} on $G$. It moves at each step from the current vertex $v$ to a neighbor that is chosen uniformly at random from among those that satisfy $\Psi(v)>\Psi(u)$. Likewise, in the {\em mild random walk} on $G$ the next vertex is chosen uniformly from among the neighbors of $v$ with $\Psi(v)\ge\Psi(u)$.

We are now able to state the main results of the present paper. We mostly work with the graph $\mathcal{G}_n$ that has $(n-1)^{\binom n2}$ vertices, that represent all (not necessarily proper) colorings of $E(K_n)$ with colors $\{1,\ldots,n-1\}$. Two vertices in $\mathcal{G}_n$ are adjacent if the corresponding colorings differ on exactly one edge. We define the potential function $\Psi(C)$ to be {\em the number of pairs of incident edges that are equally colored} in $C$. It is easy to see that $\Psi(C)=0$ iff $C$ is a OF, in which case $C$ is a {\em sink} in $\mathcal{G}_n$, i.e., each of its neighbors $C'$ satisfies $\Psi(C')>\Psi(C)$. Here are our main results:

\begin{theorem}\label{thm:weak}
There is a weak hill climbing algorithm on $\mathcal{G}_n$ that arrives from every starting point to a one-factorization in $O(n^4)$ steps. If the walk visits $u\in V(\mathcal{G}_n)$ and later visits $v$, then $\Psi(u)+B\ge\psi(v)$ for some absolute constant $B$. (e.g., $B=4$ suffices).
\end{theorem}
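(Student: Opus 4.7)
The plan is to describe an explicit weak hill-climbing walk on $\mathcal{G}_n$, partition it into \emph{phases} each of $O(n)$ steps that strictly decrease $\Psi$, and bound the within-phase excursion. Since $\Psi(C) \le O(n^3)$ trivially (each of $n$ vertices hosts at most $\binom{n-1}{2}$ conflicting pairs) and each phase reduces $\Psi$ by at least $1$, this gives $O(n^4)$ total steps; a uniform excursion bound then yields $\Psi(u) + B \ge \Psi(v)$ for $u$ preceding $v$.

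First, I would establish a local-move lemma. If $\Psi(C) > 0$, some vertex $v$ hosts a conflict in color $k$, i.e., has multiplicity $m_k(v) \ge 2$, where $m_c(x)$ denotes the number of $c$-colored edges at $x$. Since $v$ has $n-1$ incident edges using at most $n-2$ distinct colors, some color $k'$ must be missing at $v$. A direct calculation shows that recoloring a $k$-edge $vu$ to $k'$ changes $\Psi$ by
\[
\Delta\Psi \;=\; 2 - m_k(v) - m_k(u) + m_{k'}(u) \;\le\; m_{k'}(u) - 1,
\]
so whenever some pair $(u, k')$ with $u$ a $k$-neighbor of $v$ satisfies $m_{k'}(u) = 0$, a single edge-recoloring gives $\Delta\Psi \le -1$ and the phase consists of just this move.

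When no such direct move is available, I would initiate a Kempe-style $k$-$k'$ alternating walk starting at $v$ via some chosen $vu_1$, alternately traversing $k'$- and $k$-edges at subsequent vertices, and stopping either at a vertex lacking the required outgoing color or upon closing an even cycle. A phase re-colors each walk-edge in turn, swapping $k\leftrightarrow k'$ along its length. The key observation is telescoping: at each interior vertex one $k$-edge becomes $k'$ and one $k'$-edge becomes $k$, so its $(m_k, m_{k'})$-multiplicities are unchanged once the full phase has completed; the net change $\Delta\Psi$ is therefore concentrated at the two endpoints. At $v$ the contribution is $-(m_k(v) - 1) \le -1$ (using $m_{k'}(v) = 0$); at the terminal vertex the contribution is non-positive by the stopping condition. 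Since the walk has at most $n$ vertices, each phase takes $O(n)$ steps and strictly decreases $\Psi$.

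The within-phase excursion is the technical heart of the argument and the main obstacle. After a prefix of the phase has been executed, exactly one vertex $w$ along the walk is temporarily out of balance---one of its incident walk-edges has been recolored while the next has not---so $\Psi$ exceeds its pre-phase value by at most the number of newly-created monochromatic pairs at $w$, itself controlled by $m_k(w) + m_{k'}(w)$ plus a small additive term. To force this fluctuation below an absolute constant $B$, the walk must avoid vertices of large local $(k, k')$-multiplicity. I would argue that either such a routing exists, or when the walk is forced through a high-multiplicity vertex $w$, a direct single-edge reduction becomes applicable at $w$ itself---precisely because large multiplicity there implies many $\Psi$-decreasing moves. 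A careful case analysis of the local multiplicity structure is expected to yield $B = 4$; this is the delicate combinatorial step where the precise constant is pinned down.
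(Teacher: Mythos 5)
Your outline reproduces the easy parts of the theorem (a $\Psi$-decreasing phase of length $O(n)$, the $O(n^3)$ bound on $\Psi$, hence $O(n^4)$ steps), but the second assertion of the theorem --- that the excursion within a phase is bounded by an \emph{absolute} constant $B$ --- is exactly the point you leave unproven, and your proposed route to it does not work as stated. For an arbitrary coloring the quantities $m_k(w),m_{k'}(w)$ at an interior vertex of your Kempe walk can be as large as $\Theta(n)$, so the mid-phase increase of $\Psi$ is a priori unbounded; your fallback claim that ``large $(k,k')$-multiplicity at $w$ implies a direct single-edge $\Psi$-decreasing move at $w$'' is not justified and is false in general: a direct reduction at $w$ requires an incident conflicting edge $wu$ and a color $c$ missing at $w$ with $m_c(u)$ small, and nothing prevents every missing color of $w$ from being heavily represented at all the relevant neighbors $u$. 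There are also secondary issues (in an improper coloring the alternating walk is not canonically defined, and revisits/closed cycles break the clean telescoping), but the missing constant-excursion argument is the essential gap, and you acknowledge it yourself (``expected to yield $B=4$'').

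The paper sidesteps this difficulty by never making a non-monotone move except at a \emph{locally optimal} coloring. It first runs the strict single-edge walk (no excursion at all) until a local minimum of $\Psi$ is reached, and proves (Lemma 1) that such a coloring is an IV coloring: every color class is a disjoint union of single edges and two-edge paths, so every multiplicity $a_{u,\mu}$ is at most $2$. It then escapes the local minimum by a two-vertex flip between the centers $u,v$ of two suitable Vees, encoded by an auxiliary directed multigraph $H$ on the colors (each $w\neq u,v$ contributes an edge from the color of $wu$ to the color of $wv$); a beneficial flip is a reversal of a directed path in $H$, and it is executed edge by edge, i.e.\ as a sequence of single-edge recolorings in $\mathcal{G}_n$. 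Because the coloring is IV, all in- and out-degrees in $H$ are at most $2$, so the partial reversals change $\Phi(u)+\Phi(v)$ only through a bounded number of terms each of bounded size, which is precisely what yields the absolute constant $B$. If you want to salvage your approach, you should import this idea: restrict the non-monotone phases to locally optimal colorings (or otherwise guarantee all relevant multiplicities are $O(1)$ before starting a swap), rather than trying to control excursions of a Kempe chain through an arbitrary coloring.
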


In the following theorem we do the hill climb on the following, more complicated graph $\cal{D}\rm_n$. It has the same vertex set as $\cal{G}\rm_n$, namely all the colorings of $E(K_n)$ with colors $\{1,\ldots,n-1\}$. Here two vertices $C$ and $C'$ are adjacent if there are are two vertices $u, v \in V(K_n)$ such that every edge in $E(K_n)$ on which the colorings $C$ and $C'$ differ is incident with $u$ or with $v$.

\begin{theorem}\label{thm:strict}
There is a strict hill-climbing algorithm on $\cal{D}\rm_n$ that arrives from every starting point to a one-factorization in $O(n^3)$ steps.
\end{theorem}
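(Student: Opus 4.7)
The plan is to combine an a priori bound $\Psi(C)\le n\binom{n-1}{2}=O(n^3)$ with a strict descent lemma: at every non-sink $C$ some $\mathcal{D}_n$-neighbor $C'$ satisfies $\Psi(C')\le\Psi(C)-1$. Writing $d_c(v)$ for the number of $c$-colored edges at $v$, the local potential $\Psi_v(C):=\sum_c\binom{d_c(v)}{2}$ obeys $\Psi_v(C)\le\binom{n-1}{2}$ because $\sum_c d_c(v)=n-1$, and summing over the $n$ vertices gives the a priori bound. Since $\Psi$ is a non-negative integer, once the descent lemma is established the strict hill-climber terminates in $O(n^3)$ iterations, regardless of the starting state.

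The heart of the argument is the descent lemma. Given $C$ with $\Psi(C)>0$, pick $v$ with $\Psi_v(C)>0$, a color $c_1$ appearing at $v$ at least twice (name two $c_1$-edges $vv_1, vv_2$), and a color $c_2$ missing at $v$. I would trace a Kempe-style alternating trail $v=v_0, v_1, v_2, v_3$ that begins with the $c_1$-edge $vv_1$ and alternates between the color classes $c_1$ and $c_2$, using each edge at most once. The crucial geometric observation is that the color swap along any initial segment of length $k\le 3$ is a legal $\mathcal{D}_n$-move: for $k\le 3$ all three edges $vv_1, v_1v_2, v_2v_3$ are incident to $v_1$ or $v_2$, so the move on the pair $(v_1, v_2)$ covers them. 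A direct computation then shows that whenever the trail halts at $v_k$ with $k\in\{1,2,3\}$ the swap achieves $\Delta\Psi\le -1$: for $k=1$ the halt forces $d_{c_2}(v_1)=0$ and $vv_1\mapsto c_2$ gives $\Delta\Psi\le -(d_{c_1}(v)-1)\le -1$; for $k=2,3$ the internal-vertex contributions cancel thanks to the alternating choice of colors, leaving a non-positive endpoint contribution on top of the $-1$ coming from $v$.

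The main obstacle is the residual case in which $d_{c_2}(v_1),\,d_{c_1}(v_2),\,d_{c_2}(v_3)$ are all positive, so no pure Kempe swap of length at most $3$ starting at $v$ produces strict descent. Here I would exploit the full flexibility of the move on the pair $(v,v_1)$: recolor $vv_1$ from $c_1$ to $c_2$ and simultaneously recolor a $c_2$-edge $v_1w$ at $v_1$ to a freely chosen third color $c'$. A short calculation gives $\Delta\Psi\le -1+d_{c'}(v_1)+d_{c'}(w)$, which is already at most $-1$ whenever $c'$ is missing at both $v_1$ and $w$, or whenever any one of $d_{c_1}(v),\,d_{c_1}(v_1),\,d_{c_2}(w)$ strictly exceeds its assumed minimum value (each extra unit contributes an extra $-1$ of slack). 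In the genuinely extremal configuration where none of these slacks is available, a more intricate move---also touching the edge $vv_2$ and/or a second edge at $v_1$, both still incident to the pair $(v,v_1)$---closes the case. This last sub-case is the step I expect to require the most careful book-keeping, but the strict-descent conclusion is robust to the choice of representative $(v, c_1, c_2)$ among the many conflict triples guaranteed by $\Psi(C)>0$. Combined with the $O(n^3)$ bound on $\Psi$, the descent lemma yields Theorem~\ref{thm:strict}.
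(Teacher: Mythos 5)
Your overall framework (an a priori bound $\Psi=O(n^3)$ plus a strict-descent lemma for $\mathcal{D}_n$) is the same skeleton the paper uses, and your computations for the trails that halt within three steps are fine (single-edge strict moves are also the paper's first phase). The gap is the ``residual case,'' and it is not mere book-keeping: it is simply false that it can be closed by a move confined to the pair $(v,v_1)$ (or to any pair met along a length-$3$ trail from one conflict vertex). Concretely, take $n=8$ with colors $\alpha=\{01,02,45,46,37\}$, $\beta=\{12,35,67\}$, and five further perfect matchings on the remaining $20$ edges (for instance $\{03,14,26,57\}$, $\{04,13,27,56\}$, $\{05,16,23,47\}$, $\{06,34,25,17\}$, $\{07,36,24,15\}$). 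Here $\Psi=2$, the only defects being two $\alpha$-Vees centered at $0$ and at $4$. A parity argument shows that $\Psi=1$ is impossible for any coloring (a vertex missing the doubled color would itself carry a repeated color), so any strictly improving $\mathcal{D}_n$-move from this state must land directly on a one-factorization; in particular it must recolor one of $01,02$ \emph{and} one of $45,46$, hence its two distinguished vertices must meet both $\{0,1,2\}$ and $\{4,5,6\}$. But every pair your procedure ever considers --- $(0,1)$, $(0,2)$, $(1,2)$ when you start at $v=0$ (note the trail $0\to1\to2\to0$ even closes back on $v$, where the length-$3$ swap is only neutral), or $(4,5)$, $(4,6)$, $(5,3)$, $(6,7)$, $(3,7)$ when you start at $v=4$ --- stays inside one defect's neighborhood. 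Indeed, at this state every color is present at $v_1$, so your bound $\Delta\Psi\le-1+d_{c'}(v_1)+d_{c'}(w)$ never becomes negative, and no ``more intricate move'' on $(v,v_1)$ can help either; the robustness over choices of $(v,c_1,c_2)$ that you invoke also fails, since every admissible choice is symmetric to the above.

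What is missing is precisely the global ingredient the paper supplies. After reducing to locally optimal states (where every color class is a union of single edges and Vees), the paper uses a parity argument to find either a Vee of color $\alpha$ together with a Vee whose center misses $\alpha$, or two Vees of the same color $\alpha$; it then performs a $(u,v)$-\emph{flip} between the two (possibly far apart) centers, whose existence and improvement are guaranteed by an Euler-type reorientation lemma whenever $|a_{u,\lambda}-a_{v,\lambda}|\ge 2$, preceded in the second case by a $\Phi$-neutral single-edge recoloring that creates the needed imbalance. In the example above this is the move ``recolor $01$ from $\alpha$ to $\beta$, then flip on the pair $(1,4)$,'' which your local search around a single conflict can never discover. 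So the proposal as written does not prove the descent lemma, and closing the residual case essentially requires the paper's two-Vee/parity-plus-flip argument (or an equivalent global mechanism) rather than further local case analysis at $(v,v_1)$.
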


The following natural conjecture suggests itself. It is supported by ample numerical evidence, see Figure \ref{fig:metropolis}.

\begin{conjecture}\label{cnj:mild}
The mild random walk on $\mathcal{G}_n$ started from a uniformly random starting point asymptotically almost surely reaches a one-factorization in $O(n^4)$ steps.
\end{conjecture}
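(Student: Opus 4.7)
The plan is to split the $O(n^4)$ budget into \emph{progress} transitions, which strictly decrease $\Psi$, and \emph{plateau} transitions, which leave $\Psi$ unchanged. I would first establish that the only sinks of $\mathcal{G}_n$ under $\Psi$ are the one-factorizations themselves: every coloring $C$ with $\Psi(C)>0$ has some neighbor $C'$ with $\Psi(C')<\Psi(C)$. Given a monochromatic incident pair $e,f$ at a vertex $u$ with common color $c$, pigeon-hole produces a color $c'$ missing at $u$; recoloring $e$ to $c'$ eliminates the $(e,f)$ conflict at $u$, and any new conflicts can appear only at the far endpoint of $e$. A careful combinatorial accounting, in the spirit of the arguments supporting Theorems \ref{thm:weak} and \ref{thm:strict}, should produce a single-edge recoloring with strictly negative net effect.

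From a uniformly random starting coloring of $E(K_n)$, a routine second-moment calculation shows that $\Psi$ concentrates around $n(n-2)/2 = \Theta(n^2)$, so at most $O(n^2)$ progress transitions are needed a.a.s. The bulk of the argument therefore concerns plateaus. I would try to prove the following \emph{abundance lemma}: at every coloring $C$ with $\Psi(C)>0$, at least $\Omega(n)$ of the neighbors of $C$ in $\mathcal{G}_n$ have strictly smaller $\Psi$. Since $C$ has $(n-2)\binom{n}{2} = O(n^3)$ neighbors in $\mathcal{G}_n$, this would make the fraction of progress neighbors $\Omega(1/n^2)$. Combined with a standard hitting-time bound, assuming the mild walk does not systematically avoid such neighbors, this yields expected plateau escape time $O(n^2)$ and hence the advertised $O(n^4)$ total.

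The main obstacle, and presumably the reason the statement remains a conjecture rather than a corollary of Theorem \ref{thm:weak}, is precisely the italicised qualifier above: controlling the mild walk \emph{within} a plateau. Even with an $\Omega(1/n^2)$ fraction of downward neighbors at every plateau vertex, the walker could in principle migrate onto boundary-poor regions of the plateau and linger there arbitrarily long; ruling this out seems to require either proving rapid mixing of the mild walk on every plateau, or constructing an amortized monovariant that strictly decreases along plateau trajectories. A direct coupling with the strict walk of Theorem \ref{thm:strict} does not obviously work, since a single $\mathcal{D}_n$-edge need not correspond to any non-increasing path in $\mathcal{G}_n$. Both directions lead to genuinely new structural questions about the geometry of level sets of $\Psi$ in $\mathcal{G}_n$, and I believe this is where the real combinatorial content of Conjecture \ref{cnj:mild} lies.
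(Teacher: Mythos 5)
This statement is Conjecture \ref{cnj:mild}: the paper offers no proof of it, only numerical evidence, so there is nothing to match your argument against --- and your proposal, by your own admission in the last paragraph, is not a proof either. You correctly locate one genuine obstruction (controlling the mild walk inside a level set of $\Psi$), but your sketch also contains a step that is simply false and that contradicts the machinery of the paper itself. You claim that every coloring $C$ with $\Psi(C)>0$ has a neighbor in $\mathcal{G}_n$ with strictly smaller $\Psi$, and later strengthen this to an ``abundance lemma'' giving $\Omega(n)$ such neighbors. But single-edge recoloring does have local minima that are not one-factorizations: this is exactly why the paper proves Lemma \ref{shafan} (characterizing locally optimal colorings as IV colorings) and Theorem \ref{thm:main} (showing that escaping such a coloring requires a \emph{two-vertex} recoloring, i.e.\ a move in $\mathcal{D}_n$, or in Theorem \ref{thm:weak} a sequence of single-edge moves during which $\Psi$ is allowed to \emph{increase} by a bounded amount). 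At a locally optimal IV coloring that is not an OF, the number of strictly improving single-edge neighbors is zero, so both your first step and the abundance lemma fail there; your heuristic accounting (``new conflicts can appear only at the far endpoint of $e$'') is precisely the computation in the proof of Lemma \ref{shafan}, and it shows the net effect can be zero or positive at every edge, not that some edge gives a strict decrease.

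Consequently the difficulty is worse than the plateau-mixing issue you flag at the end: the level sets of $\Psi$ reachable by the mild walk contain states from which \emph{no} downward single-edge exit exists at all, and the walk must first navigate, via equal-$\Psi$ moves, to a state where such an exit appears (this is what the paper simulates deterministically in the proof of Theorem \ref{thm:weak}, at the cost of allowing $\Psi$ to rise by at most $B=4$ --- a move the mild walk is forbidden to make). Your observation that a single $\mathcal{D}_n$-edge need not decompose into a non-increasing path in $\mathcal{G}_n$ is exactly the right worry, and it is why neither Theorem \ref{thm:strict} nor Theorem \ref{thm:weak} implies the conjecture. The second-moment estimate $\Psi\approx n(n-2)/2$ for a random start is fine, but it only bounds the number of strict-descent steps; the conjecture's content is entirely in bounding the time spent on plateaus, including the time to drift from a local minimum to a plateau vertex that has a downward neighbor, and for that your proposal offers no mechanism. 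As it stands the statement remains a conjecture, and your write-up should present it as a program with two open lemmas (plateau connectivity/mixing toward exit states, and a bound on plateau sojourn times), not as a proof.
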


It may very well be that the same holds even with an arbitrary staring point.

\begin{figure}[h!]
  \centering
  \includegraphics[width=0.6\textwidth]{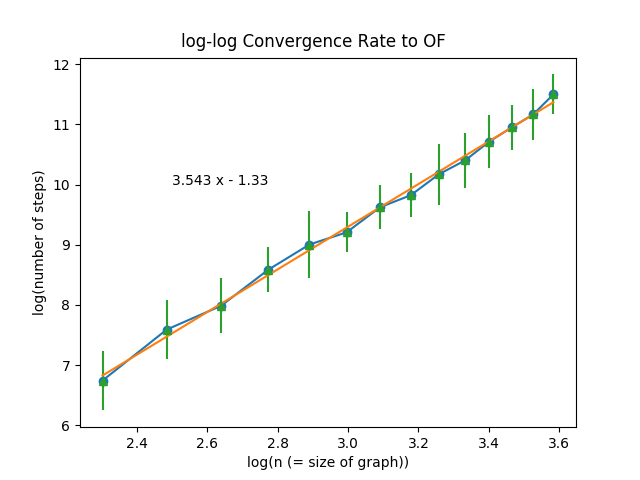}
   \caption{Convergence rate of the mild random walk on $\mathcal{G}_n$}
  \label{fig:metropolis}
\end{figure}

\section{Polynomial-time hill-climbing generative algorithms for OF$_n$'s}\label{sec:pf}

The purpose of this section is to prove Theorems \ref{thm:weak} and \ref{thm:strict}. We start with a part of the analysis that is common to both proofs. Namely, we consider what happens when we take the strict random walk on $\mathcal{G}_n$ until we reach a sink. We refer to such a state that is a local minimum of $\Psi$ as a {\em locally optimal} coloring.\\
Note that a strict walk on $\mathcal{G}_n$ is also a strict walk on $\cal{D}\rm_n$, since $\mathcal{G}_n$ is a subgraph of $\cal{D}\rm_n$, and in both cases the same potential function is applied.

\subsection{Dealing with locally optimal states}
If we insist on a strict walk, as in Theorem \ref{thm:strict}, and if the walk presently resides in a locally optimal coloring, we clearly need a different kind of modification step. To this end we introduce a {\em two-vertex recoloring}, in which we recolor some edges that are incident with two vertices. Our main technical statement is:

\begin{theorem}\label{thm:main}
If $C$ is a locally optimal coloring of $E(K_n)$ that is not a one-factorization, then there are two vertices $x, y \in [n]$ and a recoloring of the edges incident with $x, y$ such that the resulting coloring $C'$ satisfies $\Psi(C)>\Psi(C')$. Furthermore, it is possible to find the vertices $x,y$ and the said recoloring in time poly($n$).
\end{theorem}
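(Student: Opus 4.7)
My plan is to combine a structural argument for the existence of a $\Psi$-decreasing two-vertex recoloring with an efficient algorithmic routine for locating one.

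\emph{Step 1: Local optimality constraints.} I would first record the first-order consequence of single-edge local optimality in $\mathcal{G}_n$: recoloring any edge $uv$ from color $c$ to color $c'$ changes $\Psi$ by
\[
\Delta = d_u(c') + d_v(c') - d_u(c) - d_v(c) + 2 \ge 0,
\]
where $d_v(c)$ is the number of color-$c$ edges incident to $v$. Since $C$ is not a OF, some vertex $x$ carries an excess color $c_0$ with $d_x(c_0) \ge 2$; pick an edge $xy$ colored $c_0$. The missing-color set $M(x) := \{c : d_x(c) = 0\}$ is nonempty because $\sum_c d_x(c) = n - 1$, and applying the inequality above to $xy$ with $c' = c_1 \in M(x)$ gives $d_y(c_1) \ge d_x(c_0) + d_y(c_0) - 2 \ge 1$, so $M(x) \cap M(y) = \varnothing$: every color missing at $x$ is present at $y$.

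\emph{Step 2: The core Kempe-chain swap.} For any $c_1 \in M(x)$, pick $w$ with $yw$ colored $c_1$ (such $w$ exists since $d_y(c_1) \ge 1$). The two-vertex recoloring at $\{x,y\}$ that changes $xy$ from $c_0$ to $c_1$ and $yw$ from $c_1$ to $c_0$ touches only edges incident to $\{x,y\}$, so it is a legal step in $\mathcal{D}_n$. A short computation shows that the counts of every color at $y$ are preserved, so $y$'s contribution to $\Psi$ is unchanged, and the net change reduces to
\[
\Delta\Psi \;=\; -d_x(c_0) + 2 + d_w(c_0) - d_w(c_1).
\]
This is strictly negative as soon as $d_w(c_1) > d_w(c_0) + d_x(c_0) - 2$; the simplest sufficient condition is $c_0 \in M(w)$, which yields $\Delta\Psi \le -1$. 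Enumerating all quadruples $(x,y,c_1,w)$ takes $O(n^4)$ time and locates such a swap whenever it exists.

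\emph{Step 3: Main obstacle.} The hard case is when no witness $w$ exists for any allowable triple $(x,y,c_1)$: for every excess color $c_0 \in E(x) := \{c : d_x(c) \ge 2\}$, every edge $xy$ colored $c_0$, every $c_1 \in M(x)$, and every $w$ with $yw$ colored $c_1$, we have $d_w(c_0) \ge d_w(c_1) + d_x(c_0) - 2$. I plan to aggregate these inequalities across all valid witnesses and combine them with the global identities $\sum_v d_v(c) = 2|E(H_c)|$ and $\sum_c |E(H_c)| = \binom{n}{2}$, where $H_c$ is the subgraph of color-$c$ edges, to conclude that each $H_c$ must be a perfect matching, which means $\Psi(C) = 0$, contradicting the assumption. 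Making this counting argument go through cleanly is the main technical hurdle.

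\emph{Step 4: Algorithmic fallback.} Should one wish to avoid resting on the pure Kempe swap, for each pair $(x,y)$ and candidate color $\gamma$ for the edge $xy$, I would instead solve the restricted optimization of minimizing $\Psi(C')$ over colorings $C'$ agreeing with $C$ outside the edges incident to $\{x,y\}$ and proper at both $x$ and $y$. Under bijective assignments of colors to the edges $xv$ and $yv$ for $v \notin \{x,y\}$, the objective decouples into two bipartite min-cost assignment problems whose weights encode the color counts of $C$ on edges avoiding $\{x,y\}$; each is solvable by the Hungarian algorithm in $O(n^3)$, yielding a polynomial overall runtime.
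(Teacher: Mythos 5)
Your Steps 1 and 2 are sound (the local-optimality inequality, the observation that every color missing at $x$ is present at $y$, and the $\Delta\Psi$ computation for the two-edge swap are all correct, up to a sign slip: the exact condition for a strict decrease is $d_w(c_1) > d_w(c_0) - d_x(c_0) + 2$, which coincides with what you wrote only because local optimality forces $d_x(c_0)=2$). The genuine gap is Step 3, and it is not just unfinished --- the statement you plan to prove there appears to be false. There are locally optimal colorings that are not one-factorizations in which your swap never helps: the paper's analysis isolates exactly this situation, namely a coloring whose monochromatic conflicts are two-edge paths (``Vees'') such that no color $\lambda$ simultaneously has a Vee of color $\lambda$ and a vertex missing $\lambda$ (e.g.\ all conflicts are two Vees of one color $\alpha$ while every vertex is covered by $\alpha$). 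In that situation your witness $w$ would need $d_w(c_0)=0$, i.e.\ $w$ misses $c_0$; but a vertex missing a color is itself the center of a Vee, so such a $w$ cannot exist, and every swap of your form has $\Delta\Psi\ge 0$. Hence no aggregation of the ``no witness'' inequalities can force $\Psi(C)=0$, and the core existence claim of the theorem is not established. The paper gets past this with machinery you do not have: a structural lemma showing each color class of a locally optimal coloring is a disjoint union of edges and Vees, a parity argument splitting into the two cases, general $(u,v)$-flips (simultaneously exchanging colors on many pairs $wu,wv$) whose existence and benefit are proved via an Euler-type reorientation of a color multigraph, and, in the residual case above, a $\Phi$-neutral single-edge recoloring that manufactures the missing configuration followed by a flip --- all changed edges still being incident to just two vertices.

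Step 4 does not repair this. Solving, for each pair $(x,y)$, the restricted minimization over recolorings of the edges at $\{x,y\}$ (even if the assignment formulation were set up correctly) only \emph{finds} the best such recoloring; it gives no argument that for some pair the optimum is strictly below $\Psi(C)$, which is precisely the content of the theorem. Moreover, restricting to colorings proper at both $x$ and $y$ is an additional unjustified constraint (a proper completion at both vertices need not exist, and the paper's decreasing moves are not required to be proper at $x,y$). So as it stands the proposal proves the easy half (a sufficient condition for a decreasing swap, plus polynomial search) but not the existence statement in the hard case, which is where the paper's flip/Euler argument does the real work.
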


The first step in proving the theorem is to show, in the next lemma, that locally optimal colorings have a quite restricted structure. In such a coloring every monochromatic connected component is either a single edge or a path with two edges. We refer to such a coloring as an {\em IV coloring}. We call a two-edge path a {\em Vee}. A Vee has a {\em center} and two {\em ends}. Unless otherwise stated, when we speak of a Vee, we implicitly assume that it is {\em monochromatic}. A Vee whose two edges are colored $\alpha$ is denoted by Vee$_{\alpha}$. Clearly, some colors are missing at the Vee's center vertex and if $\beta$ is such a missing color we may refer to it Vee$_{\alpha}^{\beta}$. (Note that $\beta$ need not be uniquely defined. There may be several colors missing at a Vee's center). We also use the notation Vee$^{\beta}$ to indicate only that color $\beta$ is missing at the Vee's center.

If $a_{u, \mu}$ is the number of $\mu$-colored edges that are incident with $u$ in some edge-coloring $C$ of $K_n$, then $\Psi(C)=\sum_{u,\mu}\binom{a_{u,\mu}}{2}=\frac{1}{2}\sum_{u,\mu}a_{u,\mu}^2-\binom{n}{2}=\frac{\Phi(C)}{2}-\binom{n}{2}$, where $\Phi(C)=\sum_{u,\mu}a_{u,\mu}^2$. Consequently it is immaterial whether we work with either $\Phi$ or $\Psi$ and we freely switch between the two. We also use the notation $\Phi(u)=\sum_{\mu}\binom{a_{u,\mu}}{2}$.

Here is the description of a locally-optimal coloring.

\begin{lemma}\label{shafan}
In a locally optimal coloring every color class is the vertex-disjoint union of edges and Vees.
\end{lemma}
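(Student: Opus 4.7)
The plan is to extract the structural conclusion from a single local-optimality inequality, obtained by averaging over colors. Since $\Phi(C) = 2\Psi(C) + 2\binom{n}{2}$, I work with $\Phi$ throughout. A direct expansion shows that recoloring an edge $uv$ from its current color $\alpha$ to a different color $\beta$ changes $\Phi$ by
\begin{equation*}
\Delta \Phi \;=\; 2\bigl(a_{u,\beta} + a_{v,\beta} - a_{u,\alpha} - a_{v,\alpha}\bigr) + 4.
\end{equation*}
Local optimality of $C$ forces $\Delta\Phi \ge 0$ for every such single-edge swap, which gives the inequality
\begin{equation*}
a_{u,\beta} + a_{v,\beta} \;\ge\; a_{u,\alpha} + a_{v,\alpha} - 2 \qquad \text{for every } \beta \neq \alpha.
\end{equation*}

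The crucial move is to sum this inequality over all $n-2$ colors $\beta \neq \alpha$. Using $\sum_\gamma a_{w,\gamma} = n-1$ at each vertex $w$, the left-hand side becomes $2(n-1) - (a_{u,\alpha} + a_{v,\alpha})$. Setting $s := a_{u,\alpha} + a_{v,\alpha}$ and rearranging, one obtains $s \le 4 - \tfrac{2}{n-1}$; since $s$ is a positive integer and $n \ge 3$, this forces $s \le 3$. Thus every $\alpha$-edge $uv$ satisfies $a_{u,\alpha} + a_{v,\alpha} \le 3$, and since both summands are at least $1$, exactly one endpoint has color-degree $1$ in $\alpha$ while the other has color-degree $1$ or $2$.

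From this single constraint the lemma drops out. First, the maximum color-degree is at most $2$: if $a_{v,\alpha} \ge 3$, then any $\alpha$-edge at $v$ would give $s \ge 4$, contradicting the previous bound. So each color class has max degree $\le 2$ and is therefore a vertex-disjoint union of paths and cycles. Second, no edge of the class may join two vertices of color-degree $2$, again by $s \le 3$. This excludes all cycles (in which every vertex has degree $2$) and all paths of length $\ge 3$ (whose interior edges join two degree-$2$ vertices). The only surviving components are single edges and $2$-edge paths, i.e., Vees, exactly as claimed.

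The only non-obvious step is the decision to average the local-optimality inequality over colors rather than to attempt color-by-color swap arguments, which would require a messier case analysis; once the averaging is in hand, the edge-or-Vee dichotomy follows with no further work.
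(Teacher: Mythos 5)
Your proof is correct and follows essentially the same route as the paper's: the same single-edge recoloring inequality $a_{u,\beta}+a_{v,\beta} \ge a_{u,\alpha}+a_{v,\alpha}-2$, summed over all colors $\beta\neq\alpha$ to force $a_{u,\alpha}+a_{v,\alpha}\le 3$ on every $\alpha$-edge, from which the edge-or-Vee structure follows. Your final paragraph just spells out the structural deduction (max degree $\le 2$, no edge between two degree-$2$ vertices) that the paper leaves implicit.
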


\begin{proof}

We consider how $\Phi$ changes as we recolor the edge $uv$ from $i$ to $j$. Let us denote $a_{u, \mu}$ by $x_{\mu}$ and $a_{v, \mu}$ by $y_{\mu}$. The only  $a_{\nu,\mu}$ that are affected by this change are $x_i, x_j, y_i$ and $y_j$. The following inequality expresses the condition that $\Phi$ does not decrease as a result of this change \[{x_i}^2+{x_j}^2+{y_i}^2+{y_j}^2 \le ({x_i-1})^2+({x_j+1})^2+({y_i-1})^2+({y_j+1})^2.\] That is $x_i + y_i \le x_j + y_j +2$. We sum this inequality over all $j\neq i$ and use the fact that $\sum_{\mu} x_{\mu} = \sum_{\mu} y_{\mu} = n-1$ to show that $(n-1)\cdot (x_i + y_i) \le 4n-6$. Since these variables are positive integers we conclude that $(x_i + y_i) \le 3$. This implies that an $i$-colored edge can be incident to at most one other $i$-colored edge. The conclusion follows.
\end{proof}

The number of Vees in a locally optimal coloring $C$ is $\Psi(C)$. Therefore reducing $\Phi$ for a locally optimal coloring, is synonymous with a reduction in its number of Vees. Let us consider an IV coloring $C$ that is not an OF$_n$. Since $C$ is not an OF, it contains some monochromatic Vee, say Vee$_{\alpha}$. But $n$ is even, so if $C$ has exactly one Vee$_{\alpha}$, then necessarily there must be a vertex that is incident to no $\alpha$-colored edges. Such a vertex must, in turn, be the center of some Vee, say a Vee$_{\gamma}$ for some $\gamma\neq\alpha$. To sum up, if $C$ is IV but not an OF$_n$, and if $C$ has a monochromatic Vee$_{\alpha}$, then $C$ must
\begin{enumerate}

\item\label{no_alpha}
Contain a Vee$_{\gamma}^{\alpha}$ for some $\gamma\neq\alpha$. (See Figure: \ref{fig:First_Fix_Case}), or
\item
Contain an additional Vee$_{\alpha}$ (See Figure: \ref{fig:Second_Fix_Case})
\end{enumerate}

\begin{figure}[!ht]
						\begin{minipage}{.5\textwidth}
							\centering
							\begin{tikzpicture}[ every arrow/.append style={dash,thick}, node distance =2 cm and 2cm, el/.style = {align=left, sloped}, er/.style = {align=right, sloped} ]
    \node[shape=circle,draw=black] (1) at (0,0) {$v_1$};
    \node[shape=circle,draw=black, label={\small $\text{missing } \beta$}] (2) at (.5,1.8) {$v_2$};
    \node[shape=circle,draw=black] (3) at (1,0) {$v_3$};
    \path (1) edge[line width=0.3mm] node[er,above] {$\alpha$} (2);
    \path (2) edge[line width=0.3mm] node[er,above] {$\alpha$} (3); 
    \node[shape=circle,draw=black] (4) at (3,0) {$u_1$};
    \node[shape=circle,draw=black, label={\small $\text{missing }\alpha$}] (5) at (3.5,1.8) {$u_2$};
    \node[shape=circle,draw=black] (6) at (4,0) {$u_3$};
    \path (4) edge[line width=0.3mm] node[er,above] {$\gamma$}(5);
    \path (5) edge[line width=0.3mm] node[er,above] {$\gamma$}(6); 
\end{tikzpicture}
							\centering
							\caption{$\text{There is a Vee$_{\gamma}^{\alpha}$ in $C$}$}
							\label{fig:First_Fix_Case}
						\end{minipage}
						\begin{minipage}{.5\textwidth}
							\centering
							\begin{tikzpicture}[ every arrow/.append style={dash,thick}, node distance =2 cm and 2cm, el/.style = {align=left, sloped}, er/.style = {align=right, sloped} ]
    \node[shape=circle,draw=black] (1) at (0,0) {$v_1$};
    \node[shape=circle,draw=black, label={\small $\text{missing } \beta$}] (2) at (.5,1.8) {$v_2$};
    \node[shape=circle,draw=black] (3) at (1,0) {$v_3$};
    \path (1) edge[line width=0.3mm] node[er,above] {$\alpha$} (2);
    \path (2) edge[line width=0.3mm] node[er,above] {$\alpha$} (3); 
    \node[shape=circle,draw=black] (4) at (3,0) {$u_1$};
    \node[shape=circle,draw=black, label={\small $\text{missing }\gamma$}] (5) at (3.5,1.8) {$u_2$};
    \node[shape=circle,draw=black] (6) at (4,0) {$u_3$};
    \path (4) edge[line width=0.3mm] node[er,above] {$\alpha$}(5);
    \path (5) edge[line width=0.3mm] node[er,above] {$\alpha$}(6); 
\end{tikzpicture}
							\centering
							\caption{$\text{$C$ contains an additional Vee$_{\alpha}$}$}
							\label{fig:Second_Fix_Case}
						\end{minipage}			
\end{figure}

Fix an IV coloring $C$ that is not an OF$_n$. As it turns out, one of the following happens now. Either we can find a two-vertex recoloring step that reduces $\Phi$, or we can recolor an edge in a way that does not change $\Phi$ and then do a beneficial two-vertex recoloring step. In our search for a two-vertex recoloring we actually restrict ourselves to a {\em flip}, a special kind of a two-vertex recoloring step as we now describe. Let $u$ and $v$ be the two vertices whose edges we intend to recolor. In a $(u,v)$-flip, for every vertex $w\neq u,v$ either the edges $wu$ and $wv$ retain their original colors or they exchange their colors between them (See Figure: \ref{fig:switch_move_bla}). We turn to discuss next such steps that decrease $\Phi$.

\begin{figure}[h!]
  \centering
  \includegraphics[width=0.5\textwidth]{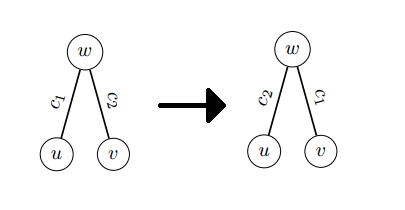}
  \caption{A $(u,v)$-flip as seen from vertex $w$}
  \label{fig:switch_move_bla}  
\end{figure}

\begin{lemma}\label{restricted_recolor}
If $|a_{u,\lambda} - a_{v,\lambda}| \ge 2$ for some color $\lambda$, then there is a $(u,v)$-flip that decreases $\Phi$.
\end{lemma}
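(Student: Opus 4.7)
The plan is to reformulate the problem in terms of local quantities at $u$ and $v$. For each $w\neq u,v$ I would write $c_1(w)$ and $c_2(w)$ for the colors of the edges $uw$ and $vw$ in $C$, and set $\delta_\mu:=a_{u,\mu}-a_{v,\mu}$. A $(u,v)$-flip leaves $a_{z,\mu}$ unchanged at every $z\neq u,v$ and, for each $\mu$, preserves $a_{u,\mu}+a_{v,\mu}$, so, writing $a'$ for the counts after the flip,
\[
\Phi(C)-\Phi(C')=2\sum_\mu\bigl(a'_{u,\mu}a'_{v,\mu}-a_{u,\mu}a_{v,\mu}\bigr).
\]
Hence it suffices to exhibit a flip that strictly increases the ``inner product'' $\sum_\mu a_{u,\mu}a_{v,\mu}$. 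Assume WLOG that $\delta_\lambda\ge 2$.

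The next step is to analyze an elementary swap: the flip $S=\{w\}$ with $c_1(w)=\alpha\neq\beta=c_2(w)$ changes the inner product by exactly $\delta_\alpha-\delta_\beta-2$, by direct computation. More generally, if $\alpha_0\to\alpha_1\to\cdots\to\alpha_k$ is a sequence of pairwise distinct colors and $w_i$ is a witness vertex with $(c_1(w_i),c_2(w_i))=(\alpha_{i-1},\alpha_i)$, then the flip $S=\{w_1,\dots,w_k\}$ telescopes at every intermediate color and produces the same net effect as a single swap from $\alpha_0$ to $\alpha_k$, so it increases the inner product by $\delta_{\alpha_0}-\delta_{\alpha_k}-2$. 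The $w_i$ are automatically distinct, since each vertex has a unique color pair and the arcs of a simple path carry distinct pairs.

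This motivates introducing an auxiliary digraph $H$ on the color set $\{1,\ldots,n-1\}$, with an arc $\alpha\to\beta$ (for $\alpha\neq\beta$) whenever some $w\neq u,v$ has $(c_1(w),c_2(w))=(\alpha,\beta)$, and letting $R$ be the set of colors reachable from $\lambda$ in $H$. The key inequality I would establish is $\sum_{\mu\in R}\delta_\mu\le 0$. Since the edge $uv$ contributes $0$ to every $\delta_\mu$, we have $\delta_\mu=|\{w:c_1(w)=\mu\}|-|\{w:c_2(w)=\mu\}|$; by the closure of $R$ under $H$, every $w$ with $c_1(w)\in R$ also has $c_2(w)\in R$, giving
\[
\sum_{\mu\in R}\delta_\mu=-\bigl|\{w:c_2(w)\in R,\ c_1(w)\notin R\}\bigr|\le 0.
\]

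To close the argument, I would assume for contradiction that every $\beta\in R$ satisfies $\delta_\beta\ge\delta_\lambda-2$. Then
\[
\sum_{\mu\in R}\delta_\mu\ge \delta_\lambda+(|R|-1)(\delta_\lambda-2)=|R|(\delta_\lambda-2)+2\ge 2
\]
since $\delta_\lambda\ge 2$, contradicting the previous inequality. Hence some $\beta\in R$ has $\delta_\lambda-\delta_\beta>2$, and flipping along the witnesses of any simple directed $\lambda$-to-$\beta$ path in $H$ strictly decreases $\Phi$. All of this (constructing $H$, computing $R$, extracting a simple path) is polynomial in $n$. The main technical care point I anticipate is the telescoping step: insisting on a \emph{simple} path ensures that each intermediate color occurs exactly once in both a ``head'' and ``tail'' position so that its net contribution cancels, and simultaneously guarantees that the chosen witness vertices are genuinely distinct.
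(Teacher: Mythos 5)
Your proof is correct, and while it is built on the same auxiliary digraph of colors that the paper uses (one arc for each $w\neq u,v$, from the color of $wu$ to the color of $wv$) and likewise realizes the flip by reversing a directed path, the argument that a beneficial reversal exists is genuinely different. The paper reduces the lemma, via the identity $x^2+y^2=\tfrac12\bigl((x+y)^2+(x-y)^2\bigr)$ and the conservation of $a_{u,\mu}+a_{v,\mu}$, to Claim \ref{simple_improve}, and then invokes the Euler-type reorientation statement (Claim \ref{euler}): the digraph can be globally rebalanced so that every color ends up with $|a'_{u,\mu}-a'_{v,\mu}|\le 1$, which is strict at $\lambda$. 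You instead recast the goal as strictly increasing the inner product $\sum_\mu a_{u,\mu}a_{v,\mu}$, compute that reversing one simple $\lambda\to\beta$ path changes it by exactly $\delta_\lambda-\delta_\beta-2$, and obtain a suitable target $\beta$ from the closure-plus-averaging inequality $\sum_{\mu\in R}\delta_\mu\le 0$ over the set $R$ of colors reachable from $\lambda$. Your route is more local and quantitative: it identifies a single path reversal and gives the exact gain (at least $2$ in $\Phi$), with an arguably cleaner existence argument than the cycle-stripping and longest-path analysis behind Claim \ref{euler}. The paper's route proves something stronger—a flip that simultaneously balances all color counts at $u$ and $v$—and that extra structure is what gets reused in the proof of Theorem \ref{thm:weak}, where the flip must be decomposed into single-edge recolorings with controlled excursions of $\Phi$; your single-path version would serve equally well there, since the path reversal can also be executed edge by edge with bounded fluctuation. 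Your care points are handled correctly: the telescoping computation checks out, and on a simple path the color pairs are pairwise distinct, so the witness vertices are distinct and the flip is well defined.
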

\begin{proof}
We use, as above, the notation $x_{\mu}:=a_{u, \mu}$ and $y_{\mu}:=a_{v, \mu}$.
Recall that $\Phi(C)= \sum_{\nu} \Phi(\nu)= \sum_{\nu,\mu} \left(a_{\nu,\mu}(C)\right)^2$. For every $w\neq u, v$, the term $\Phi(w)$ does not change in a $(u,v)$-flip. Let $x'_{\mu}, y'_{\mu}$ be the values of $a_{u, \mu}$ resp. $a_{v, \mu}$ after recoloring. The lemma claims that there is a $(u,v)$-flip for which $$\sum x_{\mu}^2 + y_{\mu}^2 > \sum (x'_{\mu})^2 + (y'_{\mu})^2.$$ This inequality can be equivalently stated as $$\sum (x_{\mu} + y_{\mu})^2 + (x_{\mu} - y_{\mu})^2 >\sum (x'_{\mu} + y'_{\mu})^2 + (x'_{\mu} - y'_{\mu})^2.$$ But it is clear that for every $\mu$ there holds $x_{\mu} + y_{\mu} = x'_{\mu} + y'_{\mu}$ (since a flip preserves the number of edges of any given color that are incident with $u$ or $v$) and so, our claim would follow if we could prove
\begin{claim}\label{simple_improve}
Assuming that  $|x_{\lambda} - y_{\lambda}| \ge 2$ for some color $\lambda$, there is a $(u,v)$-flip such that $|x_{\mu}-y_{\mu}| \ge |x'_{\mu}-y'_{\mu}|$ holds for every color $\mu$ and at least one of these inequalities is strict.
\end{claim}
Claim \ref{simple_improve} is a consequence of the following simple variation on Euler's Theorem:
\begin{claim}\label{euler}
Every oriented multigraph $H$ (possibly with loops) can be reoriented in such a way that for every vertex the indegree and outdegree differ at most by one. Such a reorientation can be found in polynomial time.
\end{claim}
\begin{proof}
If $H$ contains a directed cycle $C$, its edges will not be reversed. For the sake of the proof, we remove $C$'s edges from $H$. By this removal, the difference between the indegree and outdegree of every vertex stays unchanged. We keep eliminating directed cycles this way until we reach an acyclic directed multigraph, which, for convenience we still call $H$. If there is some vertex $s$ for which $d_H^+(s)-d_H^-(s)\ge 2$, let $P$ be a longest directed path that starts at $s$. It necessarily ends at a {\em sink} $y$ with $d_H^+(y)=0$. If we reverse the orientation of all the edges in $P$, then $d_H^+(s)-d_H^-(s)$ remains nonnegative and goes down by $2$, the quantity $|d_H^+(y)-d_H^-(y)|$ does not increase, and for every intermediate vertex $z$ in $P$, there is no change in $d_H^+(z)-d_H^-(z)$. An identical argument works as we consider a longest directed path that ends at a vertex $t$ with $d_H^-(t)-d_H^+(t)\ge 2$ and necessarily starts at a {\em source} $x$ with $d_H^-(x)=0$. The claim follows.
\end{proof}

We introduce next the following directed multigraph $H$. Its vertex set is $\{1,\ldots,n-1\}$ with vertices that correspond to colors. To every vertex $w\neq u,v$ in $K_n$ there corresponds a directed edge in $H$. If in the original edge-coloring of $K_n$ the edges $wu, wv$ are colored $\alpha$ and $\beta$, then the edge in $H$ that corresponds to $w$ goes from $\alpha$ to $\beta$. Consider a reorientation of $H$ as given by Claim \ref{euler}. Accordingly,  if the directed edge $\alpha\to\beta$ gets reversed, then we switch colors between the edges $wu$ and $wv$ in $K_n$, that are presently recolored $\beta$ and $\alpha$ respectively. 

We can complete the proof of Lemma \ref{restricted_recolor}.
Let us interpret the conclusion of Claim \ref{euler} in the language of edge coloring of $K_n$. The conclusion says that $|x'_{\mu}-y'_{\mu}|\le 1$ for all colors $\mu$. However, as mentioned, there also holds $x_{\mu} + y_{\mu} = x'_{\mu} + y'_{\mu}$, so that $|x_{\mu}-y_{\mu}| \ge |x'_{\mu}-y'_{\mu}|$ for all colors $\mu$. Also, by assumption, $|x_{\lambda} - y_{\lambda}| \ge 2$, so that for $\mu=\lambda$ the inequality is strict. The claim follows.
\end{proof}

Flips thus deal successfully with the case of an IV coloring $C$ that has a Vee$_{\alpha}$ as well as a Vee$^{\alpha}$. Just apply Lemma \ref{restricted_recolor} with $u, v$ the centers of these Vees and $\lambda=\alpha$.

There is one last remaining case to consider. Namely, an IV coloring $C$ that is not an OF$_n$, which contains, for no $\lambda$, both a Vee$_{\lambda}$ and a Vee$^{\lambda}$. By the parity argument mentioned above, it must have at least two Vee$_{\alpha}$'s, say a Vee$_{\alpha}^{\beta}= \set{v_1,v_2,v_3}$ centered at $v_2$, and a Vee$_{\alpha}^{\gamma}$. However, in this last remaining case there are no Vee$^{\alpha}$, Vee$_{\beta}$ or Vee$_{\gamma}$. (We do not care whether $\beta = \gamma$ or not). The situation can be resolved by either a single edge recoloring or a single edge recoloring followed by a flip. Let us recolor the edge $v_1 v_2$ from $\alpha$ to $\beta$. This clearly decreases $\Phi(v_2)$ by $2$, and note that $\Phi(v_1)$ increases by at most $2$, since there are no Vee$_{\beta}$'s in $C$. If this single edge change decreases $\Phi(C)$ we are done. If not, $\Phi$ did not change, and this means that there is now a Vee$_\beta^\alpha$ centered at $v_1$. Together with the Vee$_{\alpha}^{\gamma}$ mentioned above, we are now in a position to apply a flip that reduces $\Phi$. This completes the description of the algorithm and the proof of Theorem \ref{thm:main}.

\subsection{Proof of Theorem \ref{thm:strict}.}\label{pf:strict}
We have essentially everything in place now. As mentioned, in $\cal{D}\rm_n$ the vertex set is the set of all the colorings of $E(K_n)$ with colors $\{1,\ldots,n-1\}$. Adjacency between $C$ and $C'$ means that there are two vertices $u, v \in V(K_n)$ such that every edge in $E(K_n)$ on which the colorings $C$ and $C'$ differ is incident with $u$ or with $v$. The statement of Theorem \ref{thm:strict} follows from that of Theorem \ref{thm:main}. The claim about the algorithm's run time is clear since $\Phi$ cannot exceed $O(n^3)$ and it decreases in every step.\qed

\subsection{Proof of Theorem \ref{thm:weak}.}\label{pf:weak}
The basic idea is to redo the proof of Theorem \ref{thm:strict} piecemeal. The initial phase is the same - At each step we recolor an edge so as to reduce the potential $\Phi$, until a locally optimal coloring is reached. The main difficulty is how to carry out a $(u,v)$-flip operation. A flip is a global change and our purpose is to translate it into a series of local steps of single edge recoloring. Let us return to the proof of Claim \ref{euler} and how it describes a flip in terms of the directed multigraph $H$. That proof shows how to carry out a beneficial flip as a series of steps each of which is the reversal of a directed path $P$ in $H$. This path either starts at a vertex with $d_H^+-d_H^-\ge 2$ and ends at a sink or starts at a source and ends at a vertex with $d_H^--d_H^+\ge 2$. The reversal of such a path decreases $\Phi$, so all that remains is understand what happens if we reverse the edges of $P$ one by one from start to end, where the reversal of an edge amounts to two consecutive steps of edge recoloring (or, a single switch move).
 
We need to translate between relevant parameters of the directed multigraph $H$ and the original coloring. Recall that $a_{u,\mu}$ is the number of $\mu$-colored edges incident with the vertex $u$. Clearly, then, $d^+_H(\mu)=a_{u,\mu}$ for every color $\mu$ other than the color of the edge $uv$. Likewise, $d^-_H(\mu)=a_{v,\mu}$. It follows that $\Phi(u)=\sum_{\mu}(d^+_H(\mu))^2+1$ and $\Phi(v)=\sum_{\mu}(d^-_H(\mu))^2+1$. So suppose that we are presently dealing with the reversal of the directed path $P$ that starts from a vertex (=color) $\alpha$ with $d_H^+(\alpha)=2$, $d_H^-(\alpha)=0$ and ends at a sink $\beta$. We wish to understand how $\Phi$ varies throughout the reversal process. As mentioned above, all the numbers $\Phi(w)$ for $w \neq u, v$ remain unchanged, so we need to monitor only the changes in $\Phi(u)+\Phi(v)$, namely in $\sum_{\mu}(d^+_H(\mu))^2+(d^-_H(\mu))^2$. We consider the change in $\Phi$ relative to its initial value throughout the edge-by-edge reversal process. Concretely, let $\gamma\neq\alpha,\beta$ be some vertex in $P$ and let us calculate the change in $\Phi$ at the moment when we have reversed the section of $P$ from $\alpha$ to $\gamma$, but have not done anything yet with the $\gamma$ to $\beta$ section of $P$. All the resulting change in $\Phi$ is due to the change in $(d^+_H(\alpha))^2+(d^-_H(\alpha))^2+(d^+_H(\gamma))^2+(d^-_H(\gamma))^2$. The statement of the theorem follows since the indegrees and outdegrees in $H$ are bounded by $2$ (since $C$ is locally optimal).
\qed

\section{The Markov Chain Perspective}\label{quo_vadis}

Hill-climbing algorithms allow us to efficiently generate large OFs. However, what we ultimately wish for is an efficient method to {\em uniformly} sample OFs. A sampling method that is efficient, uniform, and transparent enough, can help us understand the typical structure of large OFs, which is what we are trying to accomplish. This elusive sampling mechanism would presumably be a Markov Chains equipped with a Metropolis filter. Such a chain can be viewed as a quantitative version of a hill-climbing method or rather of the strict and mild random walks discussed above. We refer the reader to the beautiful exposition in \cite{levin2009markov}, Chapter 3. To illustrate the methodology we examine the sampling method that it yields through a walk on $\cal{G}\rm_{n}$. To define this walk we need to set a parameter $1> \epsilon>0$. If our current state (vertex) of the walk is the coloring $C$, we pick uniformly at random a neighbor of $C$, say the vertex that corresponds to coloring $C'$. If $\Psi(C')\le \Psi(C)$, we step from $C$ to $C'$. However, if $\Psi(C') > \Psi(C)$, we switch from $C$ to $C'$ only with probability $\epsilon^{\Phi(C') - \Phi(C)}$, and stay put at $C$ with probability $1-\epsilon^{\Phi(C') - \Phi(C)}$. In the limit distribution of the resulting Markov Chain the probability of coloring $C$ is proportionate to $\epsilon^{-\Phi(C)}$. In particular, the limit distribution is constant on all OF$_n$s. There are two main properties that we'd like this chain to have: (i) Rapid mixing and (ii) That the OFs capture enough of the total limit distribution. Whether it is possible to achieve these two goals simultaneously is presently unknown, and we raise the following problem:

\begin{problem}\label{problem:metr}
Is there a Metropolis filter for $\cal{G}\rm_{n}$, under which the resulting Markov Chain is rapidly mixing and the stationary probability of the one-factorizations is at least $n^{-O(1)}$?
\end{problem}

To get better sense of this problem, consider what happens for the extremal values of $\epsilon$. If we set $\epsilon=1$, then we are taking a walk on a Hamming cube $[q]^d$ for $q=n-1$ and $d=\binom n2$. As is well-known, this walk mixes in time $\text{polynomial}(n)$, and the limit distribution is uniform. However, the OF$_n$ occupy only a tiny fraction of  $\exp(-\Omega(n^2))$ of the space. 

At the other extreme, if we let $\epsilon = 0$, then the walk becomes the mild walk on $\cal{G}\rm_{n}$. Now the OFs are sinks so there is no mixing to speak of. Also, for $\epsilon=0$ there is no reason to expect uniform distribution and our simulations for $n=8$ (figure \ref{fig:distribution} and table \ref{way_of_eight}) indicate that the resulting distribution is indeed not uniform.

A closely related problem concerns the random process which starts with a mild walk on $\cal{G}\rm_{n}$ until some one-factorization is reached, say $\cal F$. We then modify the color of each edge randomly, independently and with probability $p$, and restart the mild walk. If $p$ is small, the next OF$_n$ to be reached is most likely $\cal F$ again. On the other hand, setting $p=1$ means a complete restart of the process. We ask:

\begin{problem}
How small can $p=p(n)$ be so that with probability bounded away from zero we reach next a OF other than $\cal F$?
\end{problem}

\subsection{A numerical illustration, n=8}

We ran the algorithm from Theorem \ref{thm:strict} and the mild walk for $n=8$. Each was repeated $10^6$ times. As shown in \cite{DicksonSaffron}, and \cite{wallis2013one}, Chapter 11, whose notation we adopt, there are exactly $6$ isomorphism classes of OF$_8$'s. These references also determine the symmetry groups of the six classes, which yields the expected number of times to sample each class under uniform distribution. The following table compares these three distributions. The figures for the mild walk and the strict algorithm seem similar, but we do not know whether this indicates a real phenomenon.

\usetikzlibrary{calc}
\begin{figure}[h!]
  \centering
  \par\medskip
  \begin{tikzpicture}
  \node (img)  {\includegraphics[scale=0.8]{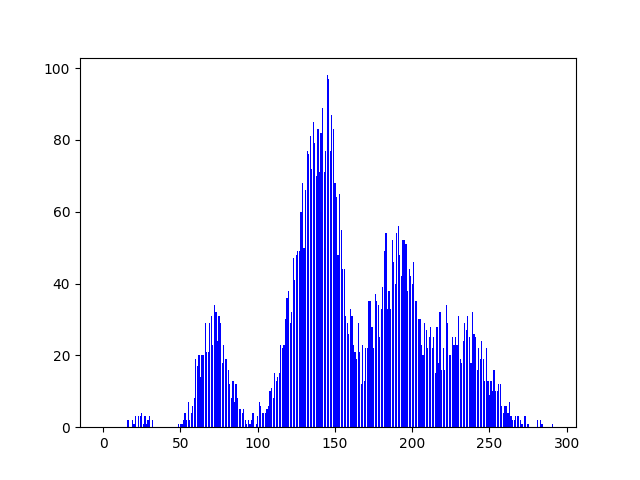}};
  \node[below=of img, node distance=0cm, yshift=1cm,font=\color{black}] {$\#$ Of Times An OF Was Reached};
  \node[left=of img, node distance=0cm, rotate=90, anchor=center,yshift=-0.7cm,font=\color{black}] {$\#$ Of OFs Reached This Amt. Of Times};
%   \node[above=of img, node distance=0cm, anchor=center,font=\color{black}] {Limit Distribution of OFs with $\epsilon =0$};
 \end{tikzpicture}
	\caption{Limit Distribution of OF$_8$ in the Strict Algorithm.}
  \label{fig:distribution}
\end{figure}

\begin{center}\label{way_of_eight}
\begin{tabular}{ c | c | c | c }
Isomorphism type & Expectation under uniform dist. & Strict Algorithm (Theorem \ref{thm:strict}) & Mild Walk \\ 
\hline
 A & 4807.69 & 547 & 747 \\  
 B & 403846.15 & 355545 & 356265 \\ 
 C & 269230.77 & 305384 & 321701 \\ 
 D & 67307.69 & 66218 & 50959 \\ 
 E & 100961.53 & 40735 & 45058\\
 F & 153846.15 & 231571 & 225270
\end{tabular}
\captionof{table}{Distribution over 1M OF$_8$ Sampled From Our Algorithm}
\end{center}

\section{What we want to know about large OFs}\label{section:expect}

\subsection{Enumeration}

The enumeration of OFs is a challenging problem that has received considerable attention. Of course, we do not expect to have a closed form formula for $f_n$, the number of distinct OF$_n$'s, and the largest $n$ for which $f_n$ is known is $n=14$ \cite{MR2489440}. At present, even the asymptotic behavior of $f_n$ is still not fully resolved, and the current best asymptotic bounds are 
\begin{equation}\label{formula:number}
\left((1+o(1))\cdot\frac{n}{e^2}\right)^{(n^2/2)} \ge f_n \ge \left((1+o(1))\cdot\frac{n}{4e^2}\right)^{(n^2/2)}
\end{equation}
see, e.g., \cite{linial2013upper}. Experts in the field seem confident that the upper bound gives the correct asymptotic value, but this remains unproven at present.

\subsection{In the context of high-dimensional combinatorics}

Let us recall some basic notions in high-dimensional combinatorics. Let $k\ge 2$ be an integer. A {\em $(k-1)$-dimensional permutation} \cite{linial2014upper} is a map $A:[t]^k\to \{0,1\}$ with the following property: For every index $k\ge i \ge 1$ and for every choice of values $t \ge y_j \ge 1$, for all $j\neq i$, there is exactly one value $t\ge z\ge 1$ for which $A(y_1,\ldots,y_{i-1},z, y_{i+1},\ldots,y_{d+1})=1$. We note that a one-dimensional permutation is simply a permutation matrix and that a two-dimensional permutation is synonymous with a Latin square. Also, a OF can be viewed as a Latin square that is symmetric w.r.t.\ the main diagonal, all whose diagonal entries equal $n$. Some of the questions that we consider here extend naturally to the broader context of high-dimensional permutations, e.g., \cite{zbMATH06637028}. Likewise, for Steiner Triple Systems which can be viewed as a Latin square with an even richer symmetry and the same problems and issues apply there as well.

Jacobson and Matthews \cite{jacobson1996generating} have found a Markov Chain whose state set is comprised of all order-$n$ Latin squares. The underlying graph is regular, and as they proved, it is connected, so that the limit distribution is uniform. Whether or not this chain mixes rapidly remains open. 

Consider the strict walk on the graph $\cal{L}\rm_n$ whose $(n!)^{n-1}$ vertices correspond to all $n\times n$ arrays $A$ every row of which is a permutation from $S_n$. The potential $\Psi(A)$ is defined as the number pairs of equal entries in $A$ that reside in the same column. Arrays $A$ and $A'$ are adjacent iff the entries in which they differ belong to only two columns. This yields an efficient hill-climbing generator of order-$n$ Latin squares\footnote{This construction was found in a discussion with Zur Luria. We are grateful to him for his permission to include it here.}, but we do not know how to solve:

\begin{problem}
Find efficient hill-climbing methods to generate permutations in dimensions $3$ and above.
\end{problem}

\subsection{High girth}

The search for high-girth graphs is a long and ongoing saga in modern graph theory. It is closely related to key problems such as the study of expander graphs, to sparsity and small discrepancy in graphs. Similar phenomena are of interest in OF's as well. The most outstanding open question regarding high girth in OF's is Kotzig's well-known conjecture from 1964 \cite{kotzig1963hamilton}.

\begin{conjecture}[Kotzig's perfect OF conjecture]\label{perfect:of}
For every even $n$, there is a one-factorization of $K_n$ in which every two color classes form a Hamilton cycle.
\end{conjecture}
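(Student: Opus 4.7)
The natural attack consistent with the rest of this paper is to turn Kotzig's conjecture into a hill-climbing problem whose sinks are exactly the perfect one-factorizations. Define an augmented potential
\[
\Psi^{\star}(C) \;=\; \Psi(C) \;+\; \kappa \sum_{\alpha<\beta} \bigl(k_{\alpha\beta}(C)-1\bigr),
\]
where $k_{\alpha\beta}(C)$ is the number of connected components of the subgraph of $K_n$ spanned by the $\alpha$- and $\beta$-colored edges, and $\kappa$ is a large enough positive constant. On a proper coloring each $C_{\alpha\beta}$ is a vertex-disjoint union of even cycles, so $k_{\alpha\beta}\ge 1$ with equality iff $C_{\alpha\beta}$ is a Hamilton cycle. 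Thus $\Psi^{\star}(C)=0$ precisely when $C$ is a perfect one-factorization. The plan is a two-phase hill climb: first invoke Theorem \ref{thm:strict} to drive $\Psi$ to $0$ and land at some OF$_n$, and then continue descending in $\Psi^{\star}$ on the OF-stratum by moves that preserve properness, namely swapping the colors $\alpha$ and $\beta$ along an alternating cycle of $C_{\alpha\beta}$, which merges two cycles of $C_{\alpha\beta}$ into one (or splits one into two) while possibly perturbing $k_{\alpha\gamma}$ and $k_{\beta\gamma}$ for third colors $\gamma$.

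The heart of the proof would then be an analog of Lemma \ref{shafan} for the OF-stratum: show that any OF that is locally optimal for $\Psi^{\star}$ under the above two-color moves is already a perfect OF. I would attempt this by a counting/averaging argument. If color classes $\alpha,\beta$ form more than one cycle in $C_{\alpha\beta}$, pick an edge $e$ on one cycle and an edge $f$ on another; trace their images under a third color $\gamma$ and try to exhibit either a direct two-color merging move whose side effects on $k_{\alpha\gamma},k_{\beta\gamma}$ are nonpositive, or a short three-color rotation (in the spirit of Jacobson--Matthews' $\pm 1$ moves on Latin squares) that strictly decreases $\Psi^{\star}$. Run-time would follow from the crude bound $\Psi^{\star} \le O(n^3)$ together with a strict descent.

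The main obstacle is the obvious one: Kotzig's conjecture has been open since 1964, and the handful of infinite families for which it is known --- $n-1$ prime and $2n-1$ prime --- are resolved by algebraic number-theoretic constructions rather than by any local search. This strongly suggests that the analog of Lemma \ref{shafan} I sketched above will fail: there almost certainly exist OFs which are local minima of $\Psi^{\star}$ under any reasonable bounded-locality move set yet are not perfect, and ruling these out for every even $n$ (rather than only for algebraically favorable $n$) is precisely the point of difficulty that has defeated the problem for sixty years. A more realistic intermediate target along this line would be a weakening: either (i) prove, by Markov-chain/hill-climbing methods in the spirit of Section \ref{quo_vadis}, that a typical OF$_n$ has only $o(n^2)$ non-Hamiltonian color pairs, or (ii) show that from any OF$_n$ one can reach, in polynomially many two-color swaps, an OF$_n$ with at most $O(1)$ non-Hamiltonian color pairs. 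Either statement would already require a genuinely new combinatorial invariant, and identifying such an invariant is where I would concentrate effort before attempting the full conjecture.
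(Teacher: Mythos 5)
This statement is Kotzig's conjecture, which the paper does not prove --- it is stated as an open problem (Conjecture \ref{perfect:of}), known only for special values of $n$ (e.g.\ $n=2p$ and $n=p+1$ with $p$ an odd prime) via algebraic constructions, so there is no paper proof to compare against. Your proposal, as you yourself concede in its final paragraph, is not a proof either: it is a research program whose decisive step is left entirely unestablished. The entire weight rests on the claimed analog of Lemma \ref{shafan} for the OF-stratum, namely that any one-factorization which is locally optimal for $\Psi^{\star}$ under two-color cycle swaps (or short three-color rotations) must already be perfect. Nothing in the proposal supports this. The local-optimality analysis in the paper works because a single-edge recoloring changes only four of the quantities $a_{u,\mu}$, so the effect on $\Phi$ can be summed and bounded explicitly; by contrast, swapping colors $\alpha$ and $\beta$ along a cycle of $C_{\alpha\beta}$ can simultaneously alter $k_{\alpha\gamma}$ and $k_{\beta\gamma}$ for every third color $\gamma$ in ways that are not locally controlled, and your ``counting/averaging argument'' for exhibiting a move with nonpositive side effects is only named, not carried out. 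Indeed local minima of such component-counting potentials that are not global minima are exactly the expected obstruction, and ruling them out for all even $n$ is the open problem itself, not a technical lemma one can defer.

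A secondary issue: even if the local-optimality claim were granted, the run-time argument is incomplete as stated, since the proposed three-color rotations need not preserve the OF-stratum unless you verify that properness is maintained, and the bound $\Psi^{\star}=O(n^3)$ only gives termination of a \emph{strict} descent, which again presupposes that a strictly improving move always exists at every non-perfect OF --- the unproven step. Your suggested weakenings (typicality of few non-Hamiltonian pairs, or reachability of an OF with $O(1)$ bad pairs) are reasonable directions consistent with the spirit of Section \ref{quo_vadis}, but they too are unproved here. In short: the statement remains a conjecture, and the proposal contains a genuine gap at its central lemma rather than a proof by a different route.
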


This is known to be true for $n=2p$ and for $n=p+1$ where $p$ is an odd prime, and in an additional finite list of $n$'s.

Erd\H{o}s has defined the {\em girth} of a Steiner Triple System $X$ as the smallest integer $g\ge 5$ such that there is a set of $g$ vertices that contains at least $g-2$ triples of $X$. (Note that the girth of a {\em graph} is the least integer $g\ge 3$ such that there is a set of $g$ vertices than spans $\ge g$ edges). He made the following conjecture:

\begin{conjecture}[Erd\H{o}s STS girth conjecture]
There exist Steiner Triple Systems of arbitrarily high girth.
\end{conjecture}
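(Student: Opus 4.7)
The plan is to mimic the two-phase \emph{semi-random plus absorption} strategy that has proven effective for analogous hypergraph design problems. Fix a target girth $g_0$. In the first phase, I would run a random greedy triple-selection process on $[n]$: at each step, choose uniformly at random a triple that (i)~is edge-disjoint from every previously chosen triple, and (ii)~does not complete a forbidden configuration, i.e., a set of $g<g_0$ vertices spanning $g-2$ triples once it is added. Using the differential-equation method together with Azuma/Freedman-type concentration (in the spirit of Bohman's analysis of the triangle-free process and the Keevash-style nibbling machinery), I would track, along the process, the number of still-eligible triples on each pair and the number of ``near-bad'' configurations (sets of $g<g_0$ vertices already spanning $g-3$ triples that could become bad with one more step), and argue that the process survives until only a sparse pair-set remains uncovered.

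The second phase is absorption. Before starting the random process, I would set aside a small random vertex reservoir $R$ and pre-build an \emph{absorbing} partial STS on triples touching $R$ with the property that, for every possible sparse leftover pair-set produced by Phase~1, the absorber can be reconfigured into a completion of the STS that introduces no forbidden configuration. Existence of such an absorber would be established probabilistically, exploiting the exponentially many local completions available inside $R$ together with a union bound over the possible leftover patterns. Combining the two phases gives an STS whose girth exceeds $g_0$; since $g_0$ is arbitrary, the conjecture follows.

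The main obstacle is that, unlike the classical triangle-free process where one avoids a single forbidden substructure, here one must simultaneously suppress an unbounded family of forbidden configurations: one family for each $g<g_0$, with many isomorphism types per $g$. Controlling all of their densities throughout the random process requires a carefully chosen hierarchy of potential functions indexed by configuration type, and designing absorbers that respect every such constraint at once is equally delicate. A secondary difficulty is matching the boundary between the two phases: the leftover must be sparse and sufficiently well-distributed for the absorber to absorb it, which forces tight quantitative bounds in the random phase. This is essentially the point at which the problem has historically stalled, and any successful implementation will likely need a new combinatorial identity or entropy-type argument to handle the multitude of forbidden configurations uniformly.
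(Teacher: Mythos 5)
The statement you are addressing is not proved in the paper at all: it is stated as an open conjecture (attributed to Erd\H{o}s), and the authors explicitly remark that the state of knowledge is poor --- at the time of writing, not a single Steiner triple system with girth at least $8$ was known. So there is no ``paper proof'' to compare against, and the relevant question is whether your proposal constitutes a proof on its own. It does not. What you have written is a research programme, not an argument: every step that would carry mathematical weight is deferred. The claim that the constrained random greedy process ``survives until only a sparse pair-set remains uncovered'' is precisely the hard part --- one must control, simultaneously and throughout the process, the counts of all near-forbidden configurations for every $g<g_0$ and every isomorphism type, and you give no potential functions, no trajectory equations, and no concentration argument, only the assertion that such a hierarchy could be ``carefully chosen.'' Likewise, the absorber is assumed to exist ``for every possible sparse leftover pair-set'' via an unspecified union bound; constructing an absorbing structure that completes an arbitrary sparse leftover \emph{without creating any forbidden configuration} is itself a problem of the same order of difficulty as the original conjecture, and nothing in your sketch addresses how the absorber's internal triples avoid interacting badly with the Phase~1 triples. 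Your own closing paragraph concedes that these are exactly the points where the problem ``has historically stalled,'' which is an admission that the proposal contains no new idea that closes the gap.

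For perspective: the strategy you outline (semi-random construction plus absorption) is indeed the one that ultimately succeeded --- the conjecture was resolved by Kwan, Sah, Sawhney and Simkin, whose proof combines a high-girth variant of the triangle removal process with an elaborate iterative absorption scheme tailored to the girth constraints. But that work required substantial new machinery precisely to overcome the two obstacles you name and leave open. So the direction is sensible, but as submitted the proposal establishes nothing; to turn it into a proof you would need, at minimum, a precise family of tracked random variables with verified self-correcting trajectory estimates, and an explicit absorber construction with a proof that its reconfigurations preserve girth.
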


Our state of knowledge concerning this problem is rather dismal. In particular, not a single triple system is known with girth $\ge 8$. In this view, we raise the following relaxation of the Erd\H{o}s conjecture.

\begin{problem}
Is there a constant $c > 0$ and a family of $3$-uniform $n$-vertex hypergraphs with at least $cn^2$ hyperedges and arbitrarily large girth?
\end{problem}

Coming back to conjecture \ref{perfect:of}, we note that not much seems to be known about the union of three or more  color classes. Moore's bound says that the girth of an $n$-vertex $d$-regular graphs is at most $(2-o_n(1))\cdot\frac{\log n}{\log(d-1)}$. We believe that this  bound is not sharp, and that the coefficient $(2-o_n(1))$ in this bound should be replaceable by a number strictly smaller than $2$. Deciding this problem has turned out to be rather difficult. We raise the following problem in the hope that it offers a viable approach to this question, by way of studying OF's.

Let $d\ge 3$ be an integer, and let $X$ be an OF$_n$. Consider all the $\binom{n-1}{d}$ graphs that can be obtained as the union of some $d$ color classes classes in $X$. If each of these graphs can be made to have girth $(2-o_n(1))\cdot\frac{\log n}{\log(d-1)}$ (i.e., meet Moore's bound) by removing only $O_d(1)$ of its edges, we say that $X$ is {\em $d$-perfect}.

\begin{problem}
Do there exist for every $d\ge 3$, arbitrarily large $d$-perfect one-factorizations?
\end{problem}

For record, we believe that the answer is negative for {\em every} $d\ge 3$, and hope that this may offer a strategy toward showing that the Moore bound is not sharp.

\subsection{Sparsity and low discrepancy}

Concerning expansion and sparsity, many questions suggest themselves.

\subsubsection{The spectrum}

\begin{problem}\label{pr:five}
Is it true that for every $d\ge 3$ and for every even $n > n_0(d)$ there exists a one factorization of $K_n$ such that the union of any set of $d$ color classes forms a Ramanujan graph?
\end{problem}

We note that this question goes substantially beyond what is currently known. It has been a longstanding open question whether arbitrarily large $d$-regular Ramanujan graphs exist for every $d\ge 3$. Substantial numerical evidence suggests that the answer is positive. A possible approach to proving this is provided by the Bilu-Linial {\em signing conjecture} \cite{bilu2006lifts}. In a recent breakthrough \cite{zbMATH06456011} Marcus, Spielman and Srivastava used their theory of interlacing polynomials to establish this conjecture for bipartite graphs. Even more far reaching is the question how likely it is for a large random $d$-regular graph to be Ramanujan. In this context one should mention Friedman's resolution \cite{friedman2008proof} of Alon's conjecture and the more recent work of Bordenave \cite{bordenave2015new} pertaining to these problems. Some simulations related to these questions appear in Figure \ref{fig:test1}.

A less ambitious, but still interesting version of this problem is:

\begin{problem}\label{pr:six}
Is it true that for every $d\ge 3$ and $\epsilon >0$ and for every even $n > n_0(d,\epsilon)$, there exists a one factorization of $K_n$ such that the union of any $d$ color classes forms a graph all of whose nontrivial eigenvalues are between $2\sqrt{d-1}+\epsilon$ and $-\epsilon - 2\sqrt{d-1}$?
\end{problem}

Of course, even much weaker statements would be of great interest, e.g., the same statement with the nontrivial eigenvalues residing in $[-d+\epsilon, d-\epsilon]$.

It is also natural to wonder whether the phenomena considered in problems \ref{pr:five} and \ref{pr:six} hold for {\em asymptotically almost all} one-factorizations.

\begin{figure}[h!]
  \centering
%   \includegraphics[width=0.9\textwidth]{n100_k5_good1.png}
%   \label{fig:test1}
%   \captionof{figure}{First OF}
%  \end{subfigure}\hfill
%   \begin{subfigure}[t]{.5\textwidth}
%   \centering
  \includegraphics[width=0.7\textwidth]{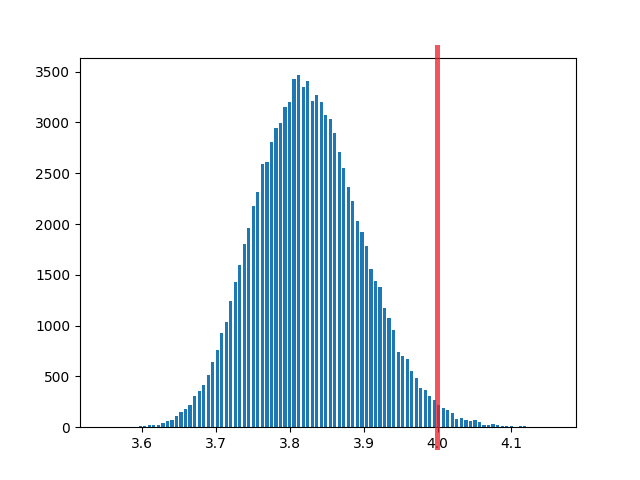}
   \caption{Distribution of the second largest eigenvalue in $10^5$ 5-Regular graphs sampled from OF$_{100}$, produced by the strict algorithm (Theorem \ref{thm:strict}). The vertical red line marks the Ramanujan bound $2\sqrt{5-1}=4$.}
  \label{fig:test1}
\end{figure}

\subsubsection{Sparsity}

For a set of vertices $S$ in an edge-colored $K_n$, let $\gamma(S)$ be the number of different colors of edges in the subgraph induced by $S$.
The {\em deficit} of a $S$ is $D(S):= \binom{|S|}{2} - \gamma(S)$.

We wish to know whether there exist OF's where all small sets have a small deficit. Concretely we ask:

\begin{problem}\label{pr:deficit}
For which $n, k$ and $d$ does it hold that every one-factorization of $K_n$ has a set of $k$ vertices with deficit $\ge d$ ? (We allow dependencies among the parameters $d, k$ and $n$).
\end{problem}
We start with the following easy observation.
\begin{proposition}\label{prop:dfct}
If $S$ is a set of vertices and $l$ of the edges in $E(S)$ are colored by only $c$ colors, then $D(S)\ge l-c$.
\end{proposition}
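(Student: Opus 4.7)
The plan is to unpack the definition of $D(S)$ directly and bound $\gamma(S)$ from above by partitioning the edges of $E(S)$ into those that contribute to the ``restricted'' set and those that do not.

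Concretely, write $k:=|S|$, so that $|E(S)|=\binom{k}{2}$. By hypothesis, there is a subset $L\subseteq E(S)$ with $|L|=l$ such that the edges of $L$ together use only $c$ distinct colors. The edges in $E(S)\setminus L$ number $\binom{k}{2}-l$, and each of them contributes at most one color to $\gamma(S)$. Thus
\[
\gamma(S)\le c+\bigl(\tbinom{k}{2}-l\bigr),
\]
since the total palette appearing in $E(S)$ is contained in the union of the $c$ colors used by $L$ and the (at most $\binom{k}{2}-l$) colors used by the remaining edges.

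Rearranging gives
\[
D(S)=\tbinom{k}{2}-\gamma(S)\ge \tbinom{k}{2}-c-\bigl(\tbinom{k}{2}-l\bigr)=l-c,
\]
which is exactly the claimed bound. There is no real obstacle here; the argument is pure counting, and the only subtlety is to notice that the colors appearing on $L$ may (harmlessly) overlap with those appearing on $E(S)\setminus L$, which only strengthens the inequality.
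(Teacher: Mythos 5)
Your proof is correct and is exactly the natural counting argument the paper has in mind (the paper states this as an "easy observation" without writing out a proof): bound $\gamma(S)\le c+\binom{|S|}{2}-l$ and subtract. Nothing to add.
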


Here are some results results concerning Problem \ref{pr:deficit}:

\begin{theorem}\label{prop:small:deficit}
For every even integer $n$ and for every $n\ge k\ge 4$, every order-$n$ one-factorization has a set of $k$ vertices with deficit $\ge k-3$.\\
For every integer $d\ge -2$ there is a positive $C$ such that every order-$n$ one-factorization has a set of $k\le C\cdot \log n$ vertices with deficit $\ge k+d$.
\end{theorem}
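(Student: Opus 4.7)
My approach splits the statement into its two halves. The first assertion is elementary and follows from the structure of $2$-factors, while the second requires a Moore-bound-style argument applied to a carefully chosen union of color classes.

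For the first assertion, I would fix any two colors $c_1,c_2$ and let $G_{1,2}:=M_{c_1}\cup M_{c_2}$, a $2$-regular subgraph of $K_n$ whose connected components are even cycles partitioning $[n]$. To build an $S\subseteq[n]$ with $|S|=k$, the plan is to greedily include whole cycles of $G_{1,2}$ until the next cycle would overshoot $k$; if we have not yet reached $k$ vertices, top off by taking a sub-path of that next cycle so that $|S|=k$ exactly. Each entirely-included cycle of length $L$ contributes $L$ edges to $G_{1,2}[S]$, and a sub-path of $p\ge 1$ vertices contributes $p-1$ edges, so the $\{c_1,c_2\}$-colored edges within $S$ number at least $k-1$. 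Applying Proposition~\ref{prop:dfct} with $l\ge k-1$ and $c\le 2$ yields $D(S)\ge (k-1)-2=k-3$, as required.

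For the second assertion, fix $d\ge -2$ and pick any $c\ge 3$ color classes of the one-factorization (it will suffice to take $c$ depending only on $d$, say $c=3$). Let $G_c := M_{c_1}\cup\dots\cup M_{c_c}\subseteq K_n$ be the resulting $c$-regular subgraph. The plan is to find a subgraph $H\subseteq G_c$ with $|V(H)|\le C\log n$ and ``excess'' $|E(H)|-|V(H)|\ge c+d$: setting $S:=V(H)$ and noting that every edge of $H$ carries one of our $c$ colors, Proposition~\ref{prop:dfct} then gives $D(S)\ge |E(H)|-c\ge |V(H)|+d$. To construct $H$ I would iterate, starting from a short cycle. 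Moore's bound on $G_c$ gives a cycle $H_1$ of length $O(\log n)$ (excess $0$); inductively, given $H_i$ with excess $i-1$, I would attach to $H_i$ a short \emph{ear}, i.e.\ a path $P$ in $G_c$ with both endpoints in $V(H_i)$, internal vertices outside $V(H_i)$, and length $O(\log n)$. Attaching $P$ adds $|V(P)|-2$ vertices and $|V(P)|-1$ edges, so the excess grows by $1$. After $c+d+1$ iterations the resulting $H$ satisfies $|V(H)| = O_{c,d}(\log n)$ (which is $O(\log n)$ for $d$ fixed) and has excess $\ge c+d$.

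The hard part will be producing a short ear at every stage. The natural approach is to contract $V(H_i)$ to a single vertex $\ast$ inside $G_c$ and look for a short cycle through $\ast$ in the contracted multigraph; such a cycle pulls back to an ear of $G_c$ of the same length. If $V(H_i)$ happens to span a full small component of $G_c$ we are actually already done, since such a component has intrinsic excess $(c-2)|V(H_i)|/2$ and is itself a valid $H$. In the generic case $\ast$ has positive degree and the contracted graph has average degree close to $c$, so an irregular version of the Moore bound (as in the work of Alon, Hoory, and Linial) can be invoked to extract a short cycle through $\ast$. Making this extraction quantitative, uniformly over all stages $i$ and despite the loss of regularity caused by the contraction, is the technical step I expect to demand the most care.
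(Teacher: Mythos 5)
Your proposal follows essentially the same route as the paper in both halves: two color classes and their disjoint even cycles give a $k$-set spanning at least $k-1$ edges, hence deficit $\ge k-3$ by Proposition \ref{prop:dfct} with $c=2$; and three color classes plus a small vertex set of excess $a=d+3$ give the second claim via Proposition \ref{prop:dfct} with $c=3$. The ear-attachment step you single out as the technical crux is precisely the content of the paper's Proposition \ref{lem:euler_char}, which the paper leaves unproved (``the same argument that establishes the Moore bound''); however, your suggested justification via an irregular Moore bound in the contracted graph is not quite right, since girth-type bounds yield a short cycle \emph{somewhere}, not one through the prescribed vertex $\ast$, and a short cycle disjoint from $V(H_i)$ adds nothing to the excess. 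The clean fix is a breadth-first search started from the whole set $V(H_i)$: every vertex of the cubic graph has degree $3$, so each collision-free level at least doubles the frontier, forcing a non-forest edge within $O(\log n)$ levels; that edge together with the two tree paths from its endpoints back to $V(H_i)$ forms an attachment (ear, pendant cycle, or lollipop) with $O(\log n)$ new vertices that raises the excess by exactly the $1$ you need per iteration.
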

\begin{proof}
To prove the first claim we use Proposition \ref{prop:dfct} with $c=2$. Namely, we pick any two color classes and consider the subgraph $H\subset K_n$ that they form. Clearly, $H$ is the disjoint union of alternating even cycles, and it therefore has a set of $k$ vertices that spans at least $l=k-1$ edges. The claim follows.

For the second claim of the theorem we again use Proposition \ref{prop:dfct}, now with $c=3$. We also need the following easy proposition whose proof essentially follows from the same argument that establishes the Moore bound.

\begin{proposition}\label{lem:euler_char}
For every positive integer $a$ there is some $b>0$ such that every cubic graph of order $n$ has a set $W$ of at most $b\cdot\log n$ vertices which spans at least $|W|+a$ edges. The bound is tight, up to the dependence of $b$ on $a$.
\end{proposition}

This second part of theorem follows by picking any three colors and applying Proposition \ref{lem:euler_char} to the graph with edges in those colors. The set $W\subseteq V$ has the claimed deficit, where the relevant parameters for Proposition \ref{prop:dfct} are $c=3, l=k+a$ and $d=a-3$.
\end{proof}

\subsection{Asymmetry}

A symmetry of a OF $X$ is specified by a permutation of the vertices $\pi\in S_n$ and a permutation of the colors $\tau\in S_{n-1}$, so that if the edge $i,j$ is colored $k$ in $X$, then the edge $\pi(i),\pi(j)$ is colored $\tau(k)$. We say that $X$ is {\em symmetric} if such $\pi\neq\text{id}$ and $\tau$ exist. Otherwise $X$ is said to be {\em asymmetric}. It is a recurring theme in probabilistic combinatorics that symmetry is rare. For example, there are several theorems which state that in various models of random graphs, asymptotically almost all graphs are asymmetric. The analogous statement for OFs was established by Cameron \cite{cameron2015asymmetric}, but the numerical evidence from \cite{dinitz1994there} suggests that symmetry among OF's is much rarer than what Cameron's bound yields. We raise the following questions:

\begin{problem}
\begin{itemize}
\item
Improve the estimate for the number of symmetric OF$_n$.
\item
Describe the most common symmetries that occur in OFs.
\end{itemize}
\end{problem}

As theoretical computer scientists know very well, it is often hard {\em to find hay in a haystack}, so we ask:

\begin{problem}
Find explicit constructions of large asymmetric one-factorizations.
\end{problem}

\section{More on Generative Hill Climbing Algorithms}\label{achra}

We start with two general comments: Hill climbing can naturally be viewed as a discrete version of the gradient descent method. There is an immense body of knowledge pertaining to gradient methods. It would be very desirable to translate and adapt some of it to the discrete world.\\
There are trivial hill climbers which must clearly be disqualified. For example - Color the edges randomly. If this yields a OF (the chance for which is clearly tiny) - take it. If not, pick a fixed-in-advance OF. One way to rule out such uninteresting methods (see our abstract), is to place an upper bound on the degrees in the underlying graph. In Theorem \ref{thm:weak} the degrees are only $O(n^3)$. For Theorem \ref{thm:strict} the degrees are harder to calculate, but they clearly do not exceed $O((n+2)!)$ which is negligible fraction of all OF$_n$ (see Equation (\ref{formula:number})).\\
It is also possible to rule out such trivial solutions by requiring symmetry under the action of $S_n$.

\subsection{Other heuristics}

We turn next to three heuristics, two from \cite{dinitz1987hill} and one home-grown. All three were tested and seem to have good convergence properties, though nothing rigorous is presently known about them. 

\subsubsection{The Dinitz and Stinson heuristics}
Both hill-climbing heuristics from \cite{dinitz1987hill} are walks on a graph $\cal{DS}\rm_n$, whose vertex set coincides with the set of all properly $(n-1)$-edge colored subgraphs of $K_n$. Concretely, if $H$ is such a subgraph of $K_n$, then every edge of $K_n$ is assigned either a color in $\{1,\ldots,n-1\}$ or is {\em uncolored}. For every $n-1\ge i \ge 1$ the edges colored $i$ form a matching. The potential $\Psi(H)$ is the number of uncolored edges in $H$. Here are their two heuristics.

\begin{heuristic}
If the color $n-1\ge i\ge 1$ is missing at a vertex $x\in V(K_n)$, there must be an uncolored edge incident with $x$, say $xy$. If the color $i$ is missing at $y$ as well, color $xy$ by $i$. Otherwise, say $yz$ is colored $i$, then uncolor $yz$ and color $xy$ by $i$.
\end{heuristic}

\begin{heuristic}
If color $i$ is not a perfect matching, say it is missing at vertices $x, y\in V(K_n)$. Color the edge $xy$ by $i$. This may mean coloring an uncolored edge or recoloring a colored one.
\end{heuristic}

\subsubsection{The "Four-Switch"}\label{abu:arba}

In this heuristic the underlying graph $\cal{M}\rm_n$ has $(n!!)^{n-1}$ vertices. Each vertex $M$ of of $K_n$ represents an ordered list of $(n-1)$ perfect matchings ("color classes") in $K_n$, so that edges of $K_n$ may be multiply colored or be uncolored. The potential $\Psi(M)$ is the number of uncolored edges in $M$.

Adjacency is defined by a {\em four-switch} move. Namely, $M$ and $M'$ are adjacent if there are four vertices $x_1, x_2, x_3, x_4\in V(K_n)$ and a color $\alpha$ such that: In $M$ the edges $x_1 x_2$ and $x_3 x_4$ have the color $\alpha$ whereas in $M'$ the edges $x_2 x_3$ and $x_1 x_4$ have the color $\alpha$. On all other edges of $K_n$ there is perfect agreement between $M$ and $M'$.

To see how the value of $\Psi$ may decrease, note that a vertex of a multiply-colored edge is incident with uncolored edges. A typical case where a four-switch reduces the potential function is shown in Figures \ref{fig:forwards_alg_first_case_before_fix} and \ref{fig:forwards_alg_first_case_after_fix}.

 		\begin{figure}[!h]
 			\begin{minipage}{.5\textwidth}
 				\centering
 				\begin{tikzpicture}[auto ,node distance =2 cm and 2cm ,on grid ,	state/.style ={ circle ,draw , text=black , minimum width =0.1 cm},  every loop/.style={}, every arrow/.append style={dash,thick}]
				\node[state] (C) {$a_1$};
				\node[state] (D) [right=of C] {$a_2$};
				\node[state] (A) [above =of C] {$b_1$};
				\node[state] (B) [above =of D] {$b_2$};
				\path (A) edge [red] (B);
				\path (A) edge [dotted] node[left] {Missing-edge} (C);
				\path (B) edge [dotted] node[right] {Missing-edge} (D);
				\path (C) edge [bend left =7, red] (D);
                 \path (C) edge [bend right =7, green] (D);

\end{tikzpicture}
 				\caption{Before the switch}
 				\label{fig:forwards_alg_first_case_before_fix}
 			\end{minipage}	
 			\begin{minipage}{.5\textwidth}
 				\centering
 				\begin{tikzpicture}[auto ,node distance =2 cm and 2cm ,on grid ,	state/.style ={ circle ,draw , text=black , minimum width =0.1 cm},  every loop/.style={}, every arrow/.append style={dash,thick}]
				\node[state] (C) {$a_1$};
				\node[state] (D) [right=of C] {$a_2$};
				\node[state] (A) [above =of C] {$b_1$};
				\node[state] (B) [above =of D] {$b_2$};
				\path (A) edge[dotted] (B);
				\path (C) edge[green] (D);
				\path (A) edge[red] (C);
				\path (B) edge[red] (D);
				
\end{tikzpicture}
 				\caption{After the four-switch}
 				\label{fig:forwards_alg_first_case_after_fix}
 			\end{minipage}	
 		\end{figure}

\bibliographystyle{plain}
\bibliography{maya}

\begin{thebibliography}{10}

\bibitem{bilu2006lifts}
Yonatan Bilu and Nathan Linial.
\newblock Lifts, discrepancy and nearly optimal spectral gap.
\newblock {\em Combinatorica}, 26(5):495--519, 2006.

\bibitem{bordenave2015new}
Charles Bordenave.
\newblock A new proof of {F}riedman's second eigenvalue theorem and its
  extension to random lifts.
\newblock {\em arXiv preprint arXiv:1502.04482}, 2015.

\bibitem{cameron2015asymmetric}
Peter~J Cameron.
\newblock Asymmetric {L}atin squares, {S}teiner triple systems, and
  edge-parallelisms.
\newblock {\em arXiv preprint arXiv:1507.02190}, 2015.

\bibitem{DicksonSaffron}
L.~E. Dickson and F.~H. Safford.
\newblock 8.
\newblock {\em The American Mathematical Monthly}, 13(6/7):150--151, 1906.

\bibitem{dinitz1994there}
Jeffrey~H Dinitz, David~K Garnick, and Brendan~D McKay.
\newblock There are 526,915,620 nonisomorphic one-factorizations of {$K_{12}$}.
\newblock {\em Journal of Combinatorial Designs}, 2(4):273--285, 1994.

\bibitem{dinitz1987hill}
JH~Dinitz and DR~Stinson.
\newblock A hill-climbing algorithm for the construction of one-factorizations
  and room squares.
\newblock {\em SIAM Journal on Algebraic Discrete Methods}, 8(3):430--438,
  1987.

\bibitem{friedman2008proof}
Joel Friedman.
\newblock {\em A proof of Alon's second eigenvalue conjecture and related
  problems}.
\newblock American Mathematical Soc., 2008.

\bibitem{Hall67}
Marshall Hall.
\newblock {\em Combinatorial theory}, volume~71.
\newblock John Wiley \& Sons, 1967.

\bibitem{jacobson1996generating}
Mark~T Jacobson and Peter Matthews.
\newblock Generating uniformly distributed random latin squares.
\newblock {\em Journal of Combinatorial Designs}, 4(6):405--437, 1996.

\bibitem{MR2489440}
Petteri Kaski and Patric R.~J. \"Osterg\aa~rd.
\newblock There are {$1,132,835,421,602,062,347$} nonisomorphic
  one-factorizations of {$K_{14}$}.
\newblock {\em J. Combin. Des.}, 17(2):147--159, 2009.

\bibitem{kotzig1963hamilton}
Anton Kotzig.
\newblock Hamilton graphs and hamilton circuits.
\newblock In {\em Theory of Graphs and its Applications, Proceedings of the
  Symposium of Smolenice}, pages 63--82, 1963.

\bibitem{levin2009markov}
David~Asher Levin, Yuval Peres, and Elizabeth~Lee Wilmer.
\newblock {\em Markov chains and mixing times}.
\newblock American Mathematical Soc., 2009.

\bibitem{linial2013upper}
Nathan Linial and Zur Luria.
\newblock An upper bound on the number of steiner triple systems.
\newblock {\em Random Structures \& Algorithms}, 43(4):399--406, 2013.

\bibitem{linial2014upper}
Nathan Linial and Zur Luria.
\newblock An upper bound on the number of high-dimensional permutations.
\newblock {\em Combinatorica}, 34(4):471--486, 2014.

\bibitem{zbMATH06637028}
Nathan {Linial} and Zur {Luria}.
\newblock {Discrepancy of high-dimensional permutations.}
\newblock {\em {Discrete Anal.}}, 1:8, 2016.

\bibitem{zbMATH06456011}
Adam~W. {Marcus}, Daniel~A. {Spielman}, and Nikhil {Srivastava}.
\newblock {Interlacing families. I: Bipartite Ramanujan graphs of all degrees.}
\newblock {\em {Ann. Math. (2)}}, 182(1):307--325, 2015.

\bibitem{mendelsohn1985one}
Eric Mendelsohn and Alexander Rosa.
\newblock One-factorizations of the complete graph—a survey.
\newblock {\em Journal of Graph Theory}, 9(1):43--65, 1985.

\bibitem{wallis2013one}
Walter~D Wallis.
\newblock {\em One-factorizations}, volume 390.
\newblock Springer Science \& Business Media, 2013.

\end{thebibliography}

\end{document}